\newtheorem{theorem}{Theorem}
\numberwithin{theorem}{section}
\newtheorem{proposition}[theorem]{Proposition}
\newtheorem{lemma}[theorem]{Lemma}
\newtheorem{corollary}[theorem]{Corollary}
\newtheorem{remark}[theorem]{Remark}
\newtheorem{example}[theorem]{Example}
\newcommand{\RP}{\mathbb{RP}}
\newcommand{\RR}{\mathbb{R}}
\newcommand{\QQ}{\mathbb{Q}}
\newcommand{\PP}{\mathbb{P}}
\newcommand{\CC}{\mathbb{C}}
\newcommand{\ZZ}{\mathbb{Z}}
\def\qmatrix#1{\left[\begin{matrix}#1\end{matrix}\right]} 
\newcommand{ \rrk }{{\mathrm{rk}_{\mathbb{R}}}}
\newcommand{ \crk }{{\mathrm{rk}_{\mathbb{C}}}}
 \date{}
\newcommand{\SSS}{S}
\newcommand{\BQ}{\Omega}
\newcommand{\note}[2]{{\marginpar{\tiny{ \bf #1: }{\color{red}#2}}}}
\title{\textbf{Congruences and Concurrent Lines\\ in Multi-View Geometry}}
\author{Jean Ponce, Bernd Sturmfels and Matthew Trager} 
\begin{document}

\maketitle

\begin{abstract} \noindent We present a new framework for multi-view
  geometry in computer vision. A camera is a mapping
  between $\PP^3$ and a line congruence. This model, which ignores
  image planes and measurements, is a natural abstraction of
  traditional pinhole cameras. It includes two-slit cameras, pushbroom cameras,
  catadioptric cameras, and many more.  We study the concurrent lines
  variety, which consists of $n$-tuples of lines in $\PP^3$ that
  intersect at a point. Combining its equations with those of various
  congruences, we derive constraints for
  corresponding images in multiple views. 
  We also study photographic cameras which use image measurements and
  are modeled as rational maps from $\PP^3$ to $\PP^2$ or $\PP^1\times\PP^1$.
  \end{abstract}

\section{Introduction}

Multi-view geometry lays the
foundations for algorithms that reconstruct a scene from multiple
images. Developed in the 1980's, building on classical
photogrammetry, this subject has had many successful
applications in computer vision. The book \cite{HZ} offers a comprehensive
introduction. Recently, on the mathematical side,
the field of {\em algebraic vision} emerged.
It studies objects such as the multi-view varieties \cite{AST, THP} and their
moduli in tensor spaces~\cite{AO, Oed}.

A pinhole camera is typically modeled as a linear map $\PP^3
\dashrightarrow \PP^2$, described by a $3 \times 4$-matrix up to scale.
This has eleven degrees of freedom, three of which describe the center
(or pinhole) in $\PP^3$, while the remaining eight degrees of freedom account for the choice
of image coordinates. In this paper we distinguish between
traditional {\em photographic} cameras that use image measurements,
and {\em geometric} ones that do not require fixing coordinate
systems, but map points onto the corresponding viewing rays. 
We work with a
generalized notion of camera, where the family of these rays is not
necessarily focused at a pinhole. This includes several practical
devices, such as pushbroom, panoramic and catadioptric
cameras~\cite{SRT}. 

The main requirement for any camera model is 
that the fibers of all image points must be lines. This is essential since light travels
along lines. With this condition, a {\em photographic camera} is for us a map
$\PP^3 \dashrightarrow \PP^2$ or $\PP^3 \dashrightarrow \PP^1 \times
\PP^1$, where $\PP^2$ or $\PP^1 \times \PP^1$ is the space of image
measurements. A {\em geometric camera} is instead a map $\PP^3 \dashrightarrow {\rm
  Gr}(1,\PP^3)$ from $3$-space into the Grassmannian of lines. 
The latter is an abstraction of a physical camera, which ignores part of the 
image formation process, namely the mapping from viewing rays 
to coordinates. In this paper, we focus 
mostly on this type of geometric cameras.
We will also assume that the coordinates of the map from points to
lines are algebraic functions.  A geometric camera is always associated with a
{\em congruence} of lines \cite{Jes}, i.e.,~a two-dimensional family of lines, that is the 
image of the camera in the Grassmannian ${\rm Gr}(1,\PP^3)$.  Indeed, it 
has already been argued that congruences should play a central role in multi-view geometry,
e.g., \cite{BGP,Paj,Pon}. In this setting, congruences of order one
\cite{Kum} are of particular interest. These define {\em rational}
geometric cameras, where the map from points to image lines is given
by rational functions. For example, a
pinhole camera is associated with the bundle of lines passing through a fixed point
in $\PP^3$, and the action of camera takes a point in $\PP^3$ to the line
joining it to the pinhole. 
A {\em two-slit camera} is associated with the common transversals
of two lines $\ell_1$ and $\ell_2$ in $\PP^3$ (the slits), and taking the picture of a world point 
$x$ now means mapping $x$ to the  line through $x$ that intersects both
$\ell_1$ and $\ell_2$.  Other rational cameras arise from the common
transversals to an algebraic space curve $C$ of degree $d$ and a line
$\ell$ meeting $C$ in $d-1$ points.

Taking pictures with $n$ rational cameras for congruences $C_1,\ldots,C_n$ defines
a rational~map
\begin{equation}
\label{eq:phimap}
 \phi \,:\,\PP^3 \,\dashrightarrow \,C_1 \times C_2 \times \cdots \times C_n
\,\subset\, ({\rm Gr}(1,\PP^3))^n \,\subset \, (\PP^5)^n. 
\end{equation}
The rightmost inclusion is
the Pl\"ucker embedding of the Grassmannian.
The surface $C_i$ now plays the role of the $i$-th image plane $\PP^2$ in
classical multi-view geometry \cite{AST,HZ}. Our main object of study in this paper is the image
of the map $\phi$. This lives in $ ({\rm Gr}(1,\PP^3))^n$ and hence in $(\PP^5)^n$.
The Zariski closure of this image is an irreducible projective variety of dimension~$3$. 
We call this variety the {\em multi-image variety} of the $n$-tuple of 
cameras $(C_1,C_2,\ldots,C_n)$. To characterize it,
 we study the variety $V_n$ of $n$-tuples
of concurrent lines in $\PP^3$. Under
suitable genericity assumptions, the multi-image variety 
equals the intersection
\begin{equation}
\label{eq:multiview}
 (C_1 \times C_2 \times \cdots \times C_n) \,\cap \,V_n 
 \qquad {\rm in} \quad {\rm Gr}(1,\PP^3)^n \,\subset\,(\PP^5)^n.
 \end{equation}

We next discuss the organization of the paper and summarize our main contributions.
 In  Section \ref{sec2} we fix our notation for
 Pl\"ucker coordinates of lines, and we review the geometry of congruences.
We show how to compute the focal locus of a congruence, 
and  we discuss how a congruence can be recovered from its focal locus.
In Section~\ref{sec3} we study the $(2n+3)$-dimensional variety $V_n$ of $n$-tuples
of concurrent lines in $\PP^3$. 
Our main result  (Theorem~\ref{thm:conclines}) characterizes  minimal ideal generators
and a Gr\"obner basis for $V_n$.
In Section~\ref{sec4} we study congruences of order one.
These were classified in 1866 by Kummer \cite{Kum}.
We revisit his classification from a computer vision perspective, 
and we derive formulas for the associated rational cameras.
Section~\ref{sec5} introduces the multi-image variety
for $n$ rational cameras. If each $C_i$ is a pinhole camera then this is isomorphic to the
 familiar multi-view variety \cite{AST}.  In Section~\ref{sec6} we
 study geometric cameras of order greater than one.  Here the
 point-to-line map is algebraic but not rational.  These include
 panoramic and catadioptric cameras.  Section~\ref{sec7} contains a
 brief discussion on photographic cameras. We point out the
 relationship between general multi-view constraints and the
 concurrent lines ideal. As concrete application, we extend the familiar
 fundamental matrix to the context of linear two-slit projections.

Our presentation is intended for a diverse audience, ranging from students in
mathematics to researchers in computer vision. The prerequisites in
algebraic geometry are minimal. We shall assume familiarity with
ideals and varieties at the level of the
undergraduate text \cite{CLO}.

\section{Lines and Congruences}
\label{sec2}

This section collects basics on the Grassmannian of lines in $3$-space and congruences of lines, that will be central for our discussion. We work in projective spaces $\PP^n$ over the field $\CC$ of complex numbers. 
Our varieties will be defined by polynomials
 that have coefficients in the field $\RR$ of real numbers, and we will be mostly interested 
 in the real locus of these varieties.

\subsection{The Grassmannian of Lines}

The {\em Grassmannian} ${\rm Gr}(1,\PP^3)$ of lines in $\PP^3$ is a $4$-dimensional 
manifold. 
The line through points $x = (x_0:x_1:x_2:x_3)$ and $y = (y_0:y_1:y_2:y_3)$ in $\PP^3$   
has {\em Pl\"ucker coordinates} $p_{ij}=x_iy_j-x_jy_i$.
The point $(p_{01}:p_{02}:p_{03}:p_{12}:p_{13}:p_{23})$ in  $\PP^5$
is independent of the choice of $x$ and $y$, and satisfies 
$p_{03}p_{12} - p_{02}p_{13} + p_{01} p_{23}=0$. All solutions to this equation come from a line, so we identify ${\rm Gr}(1,\PP^3)$ with  the {\em Pl\"ucker quadric}
$V(p_{03}p_{12} - p_{02}p_{13} + p_{01} p_{23})$ in~$\PP^5$.

We can also represent a line as the intersection of two planes.
Each plane $\{  u^0 x_0 + u^1 x_1 + u^2 x_2 + u^3 x_3 = 0\}$ in $\PP^3$
corresponds to a point $ (u^0:u^1:u^2:u^3)$ in the dual projective space $(\PP^3)^*$.
The line that is the  intersection of 
the planes $(a^0 : a^1 : a^2 : a^3)$ and $(b^0 : b^1 : b^2 : b^3)$ has {\em dual Pl\"ucker coordinates} $(p^{01}:p^{02}:p^{03}:p^{12}:p^{13}:p^{23})$ where $p^{ij}=a^ib^j-a^jb^i$. Primal and dual coordinates are related via $p^{ij}=\sigma_{(ijkl)} p_{kl}$, where $i,j,k,l$ are distinct indices and $\sigma_{(ijkl)}$ denotes the sign of the permutation $(ijkl)$. Alternatively, the duality between points and planes in $\PP^3$ given by the usual dot product induces an involution on the Pl\"ucker quadric that maps a line $p=(p_{ij})$ to a 
     dual line $p^*=(p_{23}:-p_{13}:p_{12}:p_{03}:-p_{02}:p_{01})$.

To express incidences of lines with points and planes,
it is convenient to write the Pl\"ucker coordinates
  of a line $p$ and its dual $p^*$ as the entries of two skew-symmetric $4 \times 4$-matrices:
\begin{equation}
\label{eq:PPprime}
 P = \begin{bmatrix}
   0 &  p_{23} & \!\! -p_{13} &   p_{12} \\
-p_{23} &  0  &  p_{03} & \!\! -p_{02} \\
 p_{13} &\!\! -p_{03} &   0 &  p_{01} \\
-p_{12} & p_{02} &  \!\! -p_{01} &    0  
\end{bmatrix} \qquad {\rm and} \quad\,\,\,
 P^* = \begin{bmatrix}
   0 &   p_{01} &  p_{02} & p_{03} \\
-p_{01} &   0 &  p_{12} & p_{13} \\
-p_{02} &\!\! -p_{12} &  0  &  p_{23} \\
-p_{03} & \!\! -p_{13} & \!\!\! -p_{23} &  0 
\end{bmatrix}.
\end{equation}
If $x$ and $y$ are column vectors representing points
on the line $p$, then our definition for the associated matrix $P^*$ is simply
$x y^T - y x^T$. The conditions $\,{\rm rank}(P) = 2$, $\,{\rm
  rank}(P^*) = 2$, and ${\rm trace}(P P^*) = 0$ are all equivalent to
the Pl\"ucker quadric that cuts out ${\rm Gr}(1,\PP^3)$ inside $\PP^5$.

Concurrent lines are characterized as follows:  if $q$ is an additional line represented 
by matrices $Q$ and $Q^*$ as above, then the lines 
$p$ and $q$ intersect in $\PP^3$ if and only if the bilinear form
${\rm trace}(P Q^*) \,= \,{\rm trace}(P^*Q)\,$ vanishes.
In particular, all lines that intersect a fixed line $p$ form a threefold in $\PP^5$, obtained by intersecting ${\rm Gr}(1,\PP^3)$ with a hyperplane.

Given a point $x$ in $\PP^3$, the line $p$ contains $x$ if and only if $P x=0$. 
This yields three independent linear equations in the entries of $P$. They define
a plane in $\PP^5$ contained in the Grassmannian ${\rm Gr}(1,\PP^3)$, known as the
 {\em $\alpha$-plane} of $x$. Similarly, if $u$ is a plane in $\PP^3$, then 
$u$ contains the line $p$ if and only if $P^* u = 0$. This defines a plane
   in  ${\rm Gr}(1,\PP^3)$, namely the {\em $\beta$-plane} of $u$. 
  The families of $\alpha$ and $\beta$-planes form two disjoint rulings on the Pl\"ucker quadric. Two different planes in the same family ($\alpha$ or $\beta$) always intersect in exactly one point in ${\rm Gr}(1,\PP^3)$.
  On the other hand, the $\alpha$-plane of $x$ and the $\beta$-plane of $u$ do not meet unless $x$ lies on $u$.
  Throughout this paper, we use the standard notation for  {\em join}
  ($\vee$) and {\em meet}  ($\wedge$) of linear spaces. For example,
  given $x,y$ in $\PP^3$, we write $x \vee y$ for the line they span.

Finally, if $p$ is a line and $x$ is a point not in $p$, then the non-zero vector $Px$ 
represents the plane that contains both $p$ and $x$. On the dual side,
if $u$ is a plane not containing the line $p$, then the non-zero vector $P^* u$
represents the intersection point of $u$ and $p$.

\subsection{Congruences}

A surface $C$ in ${\rm Gr}(1,\PP^3)$ represents a two-dimensional
family of lines in $\PP^3$. This is classically known as a {\em congruence} \cite{DeP, DM}.
The {\em bidegree} $(\alpha, \beta)$ of a congruence $C$ is a pair of nonnegative integers that
represents the  class of $C$ in the cohomology of ${\rm Gr}(1,\PP^3)$. 
The {\em order}  $\alpha$ is the number of lines in $C$ that pass through
a general point of $\PP^3$, while the {\em class} $\beta$ is the number
of lines in $C$ that lie in a general plane of $\PP^3$.
The study of congruences was an active area of research in the second half of the $19$-th century.
Many results from that period can be found in the book by Jessop \cite{Jes} on {\em line complexes}, 
 the classical term for threefolds in ${\rm Gr}(1,\PP^3)$. 

\begin{example} \rm {\em (1,0) and (0,1)-Congruences.} A congruence $C$ has bidegree $(1,0)$ if and only if it is an $\alpha$-plane for some point $x$ in $\PP^3$ ($C$ is the set of lines through $x$). Dually, 
a congruence $C$ has bidegree $(0,1)$ if and only if it is a $\beta$-plane for some plane $u$ in $\PP^3$. \hfill $\diamondsuit$
\end{example}

 Given an $(\alpha,\beta)$-congruence $C$, a point $x \in \PP^3$ is a {\em focal point}
 if $x$ does not belong to $\alpha$ distinct lines of $C$.
  This may happen if $x$ belongs to fewer than $\alpha$ distinct 
 lines, or if $x$ belongs to an infinite number of lines. In the latter case,
 $x$ is a {\em fundamental point}. The variety $\mathcal{F}(C)$ of focal points is the {\em focal locus},
  while the variety $\mathcal G(C)$ of fundamental points is the {\em fundamental locus}. 
 Clearly, $\mathcal G(C)$ is contained in $\mathcal F(C)$.
 Moreover, the focal locus $\mathcal F(C)$ is typically a surface in $\PP^3$. 
 It is known (cf.~\cite[Proposition 2]{DM}) that $\mathcal{F}(C)$ has lower dimension if and only
  if $C$ has order at most one, in which case $\mathcal F(C)=\mathcal G(C)$.
The image of $C$ under the map $p \mapsto p^*$
is denoted $C^*$. This {\em dual congruence} has bidegree $(\beta,\alpha)$.
 The focal locus $\mathcal{F}(C^*)$  of the dual congruence is
the projectively dual variety of the focal locus~$\mathcal{F}(C)$.

Two natural congruences are derived
from geometric objects in $\PP^3$.
Given a surface $X$ in $\PP^3$, we consider the set of all lines
that are tangent to $X$ at two points.
These bitangents satisfy two constraints, so they form a surface $\mathcal{B}(X)$
in ${\rm Gr}(1,\PP^3)$. We call this the {\em bitangent congruence} of $X$.
For a curve $Y$ in $\PP^3$, we consider the set of lines
that intersect $Y$ in two points. These lines form the {\em secant congruence} $\mathcal{S}(Y)$.
The following classical result (cf~\cite[\S 281]{Jes}) can be regarded as the fundamental
theorem on congruences. See also \cite{ABT, BG, DeP, Kum}.

\begin{theorem}
Let $C \subset {\rm Gr}(1,\PP^3)$ be an irreducible congruence. If the focal locus of $C$ is a surface
$X$ then $C$ is an irreducible component of the bitangent congruence $\mathcal{B}(X)$.
 If the focal locus of $C$ is a curve
$Y$ then $C$ is an irreducible component of the secant congruence $\mathcal{S}(Y)$.
\end{theorem}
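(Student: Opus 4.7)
The plan is to study the incidence correspondence
\[ I \,=\, \{(x,\ell) \in \PP^3 \times C : x \in \ell\}, \]
which is irreducible of dimension $3$, together with the projection $\pi : I \to \PP^3$. Since $\dim I = 3 = \dim \PP^3$, the map $\pi$ is generically finite of degree $\alpha$, and by the very definition of focal point the focal locus $\mathcal{F}(C)$ is the branch locus $\pi(R)$, where $R \subset I$ is the ramification locus on which $d\pi$ drops rank.

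The computational heart of the argument is a local analysis of $\pi$ near a critical point $(x,\ell) \in R$. I would parametrize $C$ in a neighborhood of $\ell$ by $(s,t) \mapsto \ell(s,t) = \{p(s,t) + r\, d(s,t) : r \in \CC\}$, giving $I$ local coordinates $(s,t,r)$ with $\pi(s,t,r) = p(s,t) + r\, d(s,t)$. The columns of the Jacobian of $\pi$ are then $\partial_s p + r\,\partial_s d$, $\partial_t p + r\,\partial_t d$, and $d$; the critical condition stating that these three vectors are linearly dependent is quadratic in $r$, which establishes the key numerical fact that a generic line of $C$ carries exactly two focal points. Moreover, at any critical point the image of this Jacobian is the tangent plane to $\mathcal{F}(C)$ at the branch point, and by inspection it contains $d$, the direction of $\ell$. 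When $\mathcal{F}(C) = X$ is a surface this tangency says that $\ell$ is tangent to $X$ at each of its two focal points, so $\ell$ is a bitangent of $X$; hence $C \subseteq \mathcal{B}(X)$, and since $C$ is irreducible of the same dimension $2$ as $\mathcal{B}(X)$, it must be an irreducible component.

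When $\mathcal{F}(C) = Y$ is a curve, the cited fact from \cite{DM} forces $C$ to have order one, so $\pi$ is birational and the ramification is concentrated over $Y$. The tangent-plane conclusion of the local analysis degenerates, but the two focal points of a generic $\ell \in C$ still lie on $Y$ by the definition of focal point, so each such line meets $Y$ in two points and belongs to $\mathcal{S}(Y)$; once again the irreducibility of $C$ together with the equality $\dim C = \dim \mathcal{S}(Y) = 2$ exhibits $C$ as an irreducible component of $\mathcal{S}(Y)$. The step I expect to be most delicate is making the tangency lemma robust when the quadratic in $r$ degenerates (e.g.\ along lines with a double focal point or with inflectional behaviour of the focal surface) and, in the curve case, verifying that the two focal points generically remain distinct so that the line is honestly a secant rather than a tangent; a rigorous treatment would follow the scheme-theoretic analysis of critical schemes of generically finite maps between smooth threefolds, along the lines of \cite{DM}.
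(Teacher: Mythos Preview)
The paper does not give its own proof of this theorem: it is stated as a classical result and attributed to Jessop~\cite[\S 281]{Jes} with pointers to \cite{ABT,BG,DeP,Kum}. Your outline is precisely the classical argument found in those sources---set up the incidence threefold $I \to \PP^3$, identify the focal locus with the branch locus, and use the local parametrisation $\pi(s,t,r)=p(s,t)+r\,d(s,t)$ to see that the ramification condition on each line is a quadratic in $r$, so a generic line of $C$ carries two focal points; the observation that $d=\partial_r\pi$ always lies in the image of $d\pi$ then gives the tangency, hence bitangency, when the focal locus is a surface. So there is nothing to compare: your approach \emph{is} the approach the paper is citing.

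Two small comments on the write-up. First, the sentence ``the image of this Jacobian is the tangent plane to $\mathcal{F}(C)$'' needs a genericity hypothesis: what is literally true is that ${\rm im}\,d\pi \subseteq T_{\pi(p)}\mathcal{F}(C)$ at a smooth branch point where $\pi|_R:R\to\mathcal{F}(C)$ is unramified, and one should say this holds for a generic line of $C$. Second, in the curve case your worry about the two focal points being distinct is unnecessary: tangent lines to $Y$ lie in the closure $\mathcal{S}(Y)$, so even a line meeting $Y$ in a single doubled focal point is a (degenerate) secant, and the containment $C\subseteq\mathcal{S}(Y)$ follows. The genuinely delicate point you do not mention is that for a $(0,\beta)$-congruence the map $\pi$ is not dominant and the whole framework collapses; the theorem as stated tacitly assumes $\alpha\geq 1$ (or one checks the $\alpha=0$ case separately), which is consistent with the paper's citation of \cite{DM}.
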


\begin{example} \label{ex:twotwo} \rm Consider a congruence $C$ that
  is defined by a general linear form and a general quadratic form in
  Pl\"ucker coordinates. Then $C$ has bidegree $(2,2)$. We can view
  $C$ as the intersection of two general quadrics in $\PP^4$, so it is
  a {\em del Pezzo surface of degree four}.  There are $16$ straight
  lines in $C$.  Each of these is a pencil of lines in $\PP^3$ that
  pass through a point and belong to a plane.  These $16$ points in
  $\PP^3$ form the fundamental locus $\mathcal{G}(C)$, and
  the $16$ planes form the fundamental locus of the dual congruence
  $C^*$. The focal locus  $X=\mathcal F(C)$ is a {\em Kummer
    surface}, that is, a quartic surface in $\PP^3$ with exactly $16$
  nodes. The bitangent congruence for $X$ contains $C$ but also five
  other similar $(2,2)$-congruences, and sixteen $(0,1)$-congruences,
  associated with the fundamental planes. See \cite[Example 5.5]{ABT}.
  \hfill $\diamondsuit$
\end{example}

The focal locus $\mathcal{F}(C)$ of a congruence $C$
can be computed as follows. Let $I$ be the ideal 
in $\CC[p_{01},p_{02},p_{03},p_{12},p_{13},p_{23}]$  that defines $C$.
Of course, $p_{03}p_{12} - p_{02} p_{13} + p_{01} p_{23} \in I$.
The set of lines in $C$ that pass through a point $x=(x_0 : x_1 : x_2 : x_3)$ in $\PP^3$ is
given by the ideal
\begin{equation}\label{eq:system}
 I  \,+ \,\langle P x  \rangle ,
\end{equation}
where $P$ is the $4 \times 4$-matrix in (\ref{eq:PPprime}).
 For a generic $x$ in $\PP^3$, the ideal \eqref{eq:system} has exactly $\alpha$ complex zeros in $\PP^5$.
  To compute the focal locus, we treat
  the coordinates of $x$ as parameters, and 
  we add to \eqref{eq:system} the $5 \times 5$ minors of
  the Jacobian of (\ref{eq:system}) with respect to the Pl\"ucker coordinates.
  This gives us an ideal in $\CC[p_{01},\ldots,p_{23},x_0,\ldots,x_3]$.
By saturating and eliminating $p_{01},\ldots,p_{23}$, we obtain the 
ideal in $\CC[x_0,x_1,x_2,x_3]$ that defines the focal locus $\mathcal{F}(C)$ in $\PP^3$.

\section{The Concurrent Lines Variety}
\label{sec3}

We next investigate the conditions for multiple lines to be
all concurrent in a single point. This will be applied in Section \ref{sec5} to systems of
geometric cameras.  The {\em concurrent lines variety} $V_n$ consists
of ordered $n$-tuples of lines in $\PP^3$ that meet in a point $x$. 
The lines containing a fixed $x$ form a linear
space of constant dimension~$2$ (the $\alpha$-plane for
$x$). From this one infers that $V_n$ is irreducible of dimension
$2n+3$, provided $n \geq 2$. Let $I_n$ denote the prime ideal of $V_n$
in the polynomial ring of $6n$ Pl\"ucker coordinates. We regard
$V_n=V(I_n)$ as a subvariety in the product of projective spaces
$(\PP^5)^n$.  Hence its ideal $I_n$ is $\ZZ^n$-graded.

\smallskip

The following result fully characterizes the prime ideal $I_n$ of the concurrent lines variety.

\begin{theorem} \label{thm:conclines}
Let $P_1,P_2,\ldots,P_n$ be skew-symmetric $4 {\times} 4$-matrices
of unknowns that represent lines in $\PP^3$,
and let $P_1^*, P_2^*, \ldots, P_n^*$ be the dual matrices.
The ideal $I_n$ is minimally generated
by the $\binom{n+1}{2}$ quadrics ${\rm trace}(P_i P_j^*)$ and the 
 $10 \binom{n}{3}$ cubics  obtained as $3 \times 3$-minors~of
  $\bigl(P_1 u , P_2 u, \ldots, P_n u \bigr)$
  where $u $ runs over $\{e_1,e_2,e_3,e_4,
  e_1{+}e_2, e_1{+}e_3, \ldots,e_3 {+} e_4 \}$.
For the reverse lexicographic order,  the reduced Gr\"obner basis  of $I_n$
consists of   $\binom{n+1}{2}$ quadrics,
$12 \binom{n}{3}$ cubics and $ 4 \binom{n+1}{4}$ quartics.
Their leading terms are squarefree, so the initial ideal is~radical.
\end{theorem}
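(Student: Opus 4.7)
The strategy is to verify membership of the proposed generators in $I_n$, establish set-theoretic equality with the variety $V_n$, then prove the reverse lexicographic Gr\"obner basis claim by induction on $n$, and finally deduce primality, radicality, and minimality from the squarefree leading monomials.

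\textbf{Membership and set-theoretic cut-out.} Among the $\binom{n+1}{2}$ quadrics, those with $i = j$ are Pl\"ucker relations and those with $i < j$ encode pairwise intersection (Section~\ref{sec2}), so all lie in $I_n$. For a cubic indexed by a triple $\{i,j,k\}$ and a vector $u$: when $p_i, p_j, p_k$ are concurrent at a point $x$ with $u$ off all three lines, each $P_s u$ represents the plane $p_s \vee u$, and these three planes share the line $x \vee u$, hence form a pencil. The three vectors in $\CC^4$ are thus linearly dependent, making every $3 \times 3$ minor vanish; by Zariski closure all such cubics lie in $I_n$. Conversely, let $J_n$ denote the ideal generated by these polynomials. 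A point in $V(J_n)$ has each $p_i$ on ${\rm Gr}(1,\PP^3)$ with every pair meeting. The elementary fact that pairwise-intersecting distinct lines are either concurrent or coplanar reduces the problem to excluding the coplanar non-concurrent case: there, for any $u$ outside the common plane $\Pi$, the three planes $p_s \vee u$ meet pairwise in distinct lines through $u$ and so are linearly independent, making some cubic minor nonzero. At least one of the ten distinguished $u$'s lies off any given plane, so the cubic conditions force concurrency, giving $V(J_n) = V_n$.

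\textbf{Gr\"obner basis.} Fix the reverse lexicographic order with the Pl\"ucker coordinates of $P_1$ largest and those of $P_n$ smallest. The plan is induction on $n$, with direct computer algebra verification of the base cases $n = 2, 3, 4$ supplying the pattern of S-pair reductions. S-pairs between quadrics indexed by disjoint pairs reduce to zero; S-pairs of a quadric and a cubic sharing an index produce the additional $2\binom{n}{3}$ cubics in the basis beyond the $10\binom{n}{3}$ minimal generators; and S-pair reductions within triples and across quadruples of indices yield the $4\binom{n+1}{4}$ quartics, with the Pascal identity $\binom{n+1}{4} = \binom{n}{3} + \binom{n}{4}$ organising the two contributions. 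The main obstacle is the coordinated bookkeeping of S-pair reductions across all relevant index subsets; this is tamed by the block structure, since any new basis element introduced at step $n+1$ involves the new block $P_{n+1}$ together with at most three earlier blocks, so reductions decouple cleanly across subsets of size at most four, reducing the inductive step to checking the newly opened index triples and quadruples.

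\textbf{Conclusion.} Since every leading monomial in the claimed Gr\"obner basis is squarefree, the initial ideal is a radical Stanley--Reisner ideal, so $J_n$ itself is radical. Combined with $V(J_n) = V_n$, this yields $J_n = I(V_n) = I_n$ and proves primality. For minimality, the $\binom{n+1}{2}$ quadratic generators are irredundant since $I_n$ contains no linear forms ($V_n$ is nondegenerate in $(\PP^5)^n$), and each of the $10\binom{n}{3}$ cubic generators has a leading monomial not divisible by any quadric leading monomial, so it cannot be reduced using the quadrics alone.
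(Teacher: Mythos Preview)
Your overall strategy matches the paper's: show the proposed generators cut out $V_n$ set-theoretically, establish the Gr\"obner basis claim with squarefree leading terms, and deduce radicality and hence $J_n = I_n$. The set-theoretic argument and the final radical-implies-prime deduction are sound.

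The genuine gap is in the inductive step for the Gr\"obner basis. You assert that S-pair reductions ``decouple cleanly across subsets of size at most four,'' but this undercounts. Each proposed Gr\"obner basis element has degree at most $4$ and hence involves Pl\"ucker coordinates from at most four of the blocks $P_i$. An S-pair of two such elements whose leading monomials are \emph{not} coprime---the only nontrivial case, by Buchberger's first criterion---shares at least one Pl\"ucker variable and therefore involves at most $4+4-1 = 7$ blocks. Reducing that S-pair may require basis elements drawn from any of those seven lines, so the induction does not close from the base case $n \le 4$. One must verify the Gr\"obner basis property by computer algebra through $n = 7$; only for $n \ge 8$ does the block-counting argument guarantee that every nontrivial S-pair already lives inside a verified instance. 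This is precisely what the paper does.

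A smaller point on minimality: the observation that a cubic's leading monomial is not divisible by any quadric leading monomial shows only that the cubic lies outside the ideal generated by the quadrics. It does not by itself establish that the ten cubics attached to each triple are linearly independent modulo the quadrics (rather than, say, only eight being needed); note that your argument would apply equally to all twelve Gr\"obner-basis cubics per triple, yet two of those are redundant as generators. In the paper this irredundancy comes out of the explicit {\tt Macaulay2} computation for $n=3$ and $n=4$, and the $\ZZ^n$-grading then transports it to all~$n$.
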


Our proof rests on computations with the computer algebra system
{\tt Macaulay2}~\cite{M2}.

\begin{proof}
The case $n=2$ is easy. We begin
with $n=3$. Let $P, Q, R$ be skew-symmetric $4 \times 4$-matrices representing three lines.
These matrices have rank $2$.
 The Pl\"ucker quadrics~are
\begin{equation}
\label{eq:Q1}
{\rm trace}(PP^*) \,\,\, = \,\,\,
{\rm trace}(QQ^*) \,\,\, = \,\,\,
 {\rm trace}(RR^*) \,\,\,  =\,\,\, 0.
\end{equation}
Furthermore,  the three lines are pairwise concurrent if and only if
\begin{equation}
\label{eq:Q2}
{\rm trace}(PQ^*) \,\,\, = \,\,\,
{\rm trace}(PR^*) \,\,\, = \,\,\,
 {\rm trace}(QR^*) \,\,\,  =\,\,\, 0.
\end{equation}

Using a computation with {\tt Macaulay2}, we find that
the ideal generated by the six quadrics in (\ref{eq:Q1}) and (\ref{eq:Q2})
is radical. It is the intersection of two prime ideals, each minimally generated by
ten cubics in addition to (\ref{eq:Q1}) and (\ref{eq:Q2}).
The first prime represents triples of lines that are coplanar.
This is an extraneous component for us. The second prime is
the concurrent lines variety. The cubic generators of that second prime ideal are the $3 \times 3$-minors
of the $4 \times 3$-matrix $ (P u, Q u , R u)$,
where $u$ is a column vector in $\RR^4$. These span a ten-dimensional
space of cubics. A basis for that space is obtained by selecting the vector $u$ from the set
\begin{equation}
\label{eq:ourten} \bigl\{\,e_1\,,\,\,e_2\,,\,\,e_3\,,\,\,e_4\,,\,\,
  e_1 + e_2\,,\, \,e_1 + e_3 \,,\, \, e_1+ e_4 \,,\, \,
  e_2 + e_3 \,,\, \, e_2 + e_4 \,,\, \, e_3 + e_4\, \bigr\}.
  \end{equation}
We note that the cubics for coplanar triples of lines are the $3 \times 3$-minors
of the $4 \times 3$-matrix $ (P^* \cdot u, Q^* \cdot u , R^* \cdot u)$,
where $u \in \RR^4$. A basis of $10$ cubics is obtained from the same set (\ref{eq:ourten}).

Using {\tt Macaulay2}, we now
compute the reduced Gr\"obner basis of our prime ideal from the $6+10 = 16$ generators
with respect to the reverse lexicographic order determined by
$$ p_{01} {>} p_{02} {>} p_{03} {>} p_{12} {>} p_{13} {>} p_{23} > q_{01} {>} q_{02} {>}
q_{03}  {>} q_{12} {>} q_{13} {>} q_{23} > r_{01} {>} r_{02} {>} r_{03} {>} r_{12} {>} r_{13} {>} r_{23}. $$
The initial monomial ideal is generated by the leading terms in the reduced Gr\"obner basis:
\begin{equation}
\label{eq:M3}
\begin{matrix}
M_{3} & = \!\!\!\! & \!\!
\bigl\langle\,\,
p_{03} p_{12},\, q_{03} q_{12},\, r_{03} r_{12}\,,\,\,\,
p_{23} q_{01},\, p_{23} r_{01},\, q_{23} r_{01},\,
\\
&& 
\,      p_{12} q_{02} r_{01}\,,\,\,
      p_{12} q_{03} r_{01}\,,\,\,
      p_{12} q_{03} r_{12}\,, \,\,
      p_{12} q_{03} r_{02},\, \\ &&
 \,     p_{13} q_{02} r_{01}\,,\,\,
      p_{13} q_{03} r_{01}\,, \, \,
      p_{13} q_{03} r_{12}\,,\,\,
      p_{13} q_{03} r_{02}\,,\,\,  \\ &&
\,      p_{23} q_{03} r_{02}\,,\,\,
      p_{23} q_{03} r_{12}\,,\, \,
      p_{23} q_{13} r_{02}\,,\,\,
      p_{23} q_{13} r_{12}\,,\,\,
\\      
&& \qquad \qquad\,\,
 p_{12} q_{02} q_{13} r_{02}\,,\,\,
 p_{12} q_{02} q_{13} r_{12}\,,\,\,
 p_{13} q_{02} q_{13} r_{02}\,,\,\,
 p_{13} q_{02} q_{13} r_{12}\,\bigr\rangle.
\end{matrix}
\end{equation}
This shows that the reduced Gr\"obner basis consists of
 $6$ quadrics, $12$ cubics and $4$ quartics. All $22$ leading terms are squarefree.
      This completes the proof of Theorem~\ref{thm:conclines} for $n  = 3$. 
      
We next consider the case $n=4$. A {\tt Macaulay2} computation
verifies that Theorem~\ref{thm:conclines} is true here.
The ideal $I_4$ is minimally generated by
the $10$ quadrics $\,{\rm trace}(P_iP_j^*)$ together with 
 $40 = 10 \binom{4}{3}$ cubics, namely the
$10$ cubics from $I_3$ for any three of the four lines.
The initial ideal $M_4 = {\rm in}(I_4)$ is minimally 
generated by $10 $ quadratic monomials,
$48 = 12 \binom{4}{3}$ cubic monomials,
and $20 = 4 \binom{5}{4}$ quartic monomials.
The quadrics and cubics come from $M_3$ for
any three of the four lines. Among the quartics are
the $16 = 4 \binom{4}{3}$ quartics from $M_3$ for any three of the four lines.
However, the reduced Gr\"obner basis of $I_4$ now also contains four quadrilinear forms.
These contribute four new generators of the monomial ideal~$M_4$:
\begin{equation}
\label{eq:newquartics}
p_{12} q_{02} r_{13} s_{02}\,,\,\,
p_{12} q_{02} r_{13} s_{12}\,,\,\,
p_{13} q_{02} r_{13} s_{12}\,,\,\, 
p_{13} q_{02} r_{13} s_{02}.
\end{equation}

We next assume $n \geq 5$. We write $\mathcal{G}_n$ for the union of the
various reduced Gr\"obner bases, obtained from $I_4$ for any four of the $n$ lines.
The set $\mathcal{G}_n$ has $\binom{n+1}{2}$ quadrics
${\rm trace}(P_i P_j^*)$, and it has  $12 \binom{n}{3}$ cubics, 
namely those having the $12$ leading terms in (\ref{eq:M3}), for any three lines.
Finally, there are  $4 \binom{n+1}{4} =  4 \binom{n}{3} + 4 \binom{n}{4}$
quartics in $\mathcal{G}_n$. Their leading monomials are the quartics in (\ref{eq:M3}), for any three lines,
and the quartics in (\ref{eq:newquartics}), for any four of the $n$ lines.

We claim that $\mathcal{G}_n$ is the reduced Gr\"obner basis for the ideal
$\langle \mathcal{G}_n \rangle$ it generates. This can be verified computationally
with {\tt Macaulay2} for $n \leq 7$. For $n \geq 8$, we argue as follows.
Consider any two polynomials in $\mathcal{G}_n$. We must show that their
S-polynomial reduces to zero upon division with respect to $\mathcal{G}_n$.
If their leading monomials are relatively prime then this is automatic, by
Buchberger's First Criterion. Otherwise, the leading monomials have a 
Pl\"ucker variable in common. This means that at most seven of the $n$ lines
are involved in the two polynomials. But then their S-polynomial reduces to zero
because the Gr\"obner basis property is already known for $n \leq 7$. A similar argument shows
that no trailing term in $\mathcal{G}_n$ is a multiple of an leading term. Hence
$\mathcal{G}_n$ is the reduced Gr\"obner basis for its ideal.

The minimal generators of the ideal $\langle \mathcal{G}_n \rangle$ are obtained from the
minimal generators of $I_4$, for any four of the $n$ lines. Hence $\langle \mathcal{G}_n \rangle$
is generated by the  $\binom{n+1}{2}$ quadrics and the
$10 \binom{n}{3}$ cubics that are listed in the statement of Theorem~\ref{thm:conclines}.
Its leading terms are square-free.

We must prove that the ideal $\langle \mathcal{G}_n \rangle$ equals
the ideal $I_n$ we are interested in. By construction, all generators of $\mathcal{G}_n$
vanish on the concurrent lines variety $V_n = V(I_n)$. Therefore,
\begin{equation}
\label{eq:inclusion}
 \langle \mathcal{G}_n \rangle \,\,\subseteq \,\, I_n . 
\end{equation}
Moreover, the initial ideal of $\langle \mathcal{G}_n \rangle$ is radical,
and hence $\langle \mathcal{G}_n \rangle$ is a radical ideal. 
To complete the proof, all we now need is that the set
$\mathcal{G}_n$ cuts out the variety $V_n$
set-theoretically.  This is equivalent to the statement that $n \geq 4$ distinct 
lines in $\PP^3$ are concurrent if and only if any three of the $n$ lines are concurrent.
This is indeed the case. 
\end{proof}

\begin{remark} \rm Suppose all $P_i$ satisfy the Pl\"ucker constraint ${\rm trace}(P_i P_i^*)=0$.
The  four $3 \times 3$-minors~of $\bigl(P_i u , P_j u, P_k u \bigr)$ are scalar multiples of a single trilinear polynomial $T_u$ that expresses the condition for the planes $u \vee p_i, u \vee p_j$ and $ u \vee p_k$ to be linearly dependent, i.e., for $p_i, p_j, p_k$ to admit a {\em transversal line} passing through $u$. In fact, three lines are concurrent if and only if they are pairwise coplanar and they admit a transversal not contained in the planes
defined by any two of them \cite{TPH}. From this we deduce that $V_n$ is cut out set-theoretically by $\binom{n+1}{2}$ bilinear quadrics ${\rm trace}(P_i P_j^*)$ and the 
 $4 \binom{n}{3}$ trilinear cubics $T_u$ where $u$ runs over only $\{e_1,e_2,e_3,e_4\}$. This is confirmed by computation with {\tt Macaulay2}.
\end{remark}

The concurrent lines variety $V_n$  has
codimension $3n-3$ in $(\mathbb{P}^5)^n$. 
Its class $[V_n]$ in the cohomology ring of $(\PP^5)^n$
is a homogeneous polynomial of degree $3n-3$ in $n$ unknowns
$t_1,t_2,\ldots,t_n$, where $t_i$ represents the hyperplane
class in the $i$-th factor $\mathbb{P}^5_i$.
In the language of commutative algebra, $[V_n]$ is 
known as the {\em multidegree} of $V_n$. We
refer to \cite[Section 8.5]{MS} for an introduction to multidegrees.
We also note that there is a built-in command {\tt multidegree} in
{\tt Macaulay2}  for computing $[V_n]$ from  the ideal $I_n$.
Using this command, we found experimentally that
the multidegree of the concurrent lines variety  is the polynomial
\begin{equation}
\label{eq:multidegree}
 [V_n] \,\,\,\, = \,\,\,\,
(t_1 t_2 t_3 \cdots t_n)^3 \cdot \bigl(\, 4 \sum_{(i,j)} t_i^{-2} t_j^{-1} \,\, + \,\,
8 \sum_{\{i,j,k\}} t_i^{-1} t_j^{-1} t_k^{-1}\, \bigr). \end{equation}
The first sum  is over ordered pairs $(i,j)$ with $i \not= j$. The second sum is over unordered triples $\{i,j,k\}$.
The sum of the coefficients of $[V_n]$ equals $\,8\binom{n+1}{3}$.
The variety of $M_n$ decomposes into components
  $(\PP^2)^{n-2} {\times} \PP^4_i {\times} \PP^3_j$  and
    $(\PP^2)^{n-3} {\times} \PP^3_i {\times} \PP^3_j {\times} \PP^3_k$.
    These are  recorded by~$[V_n]$.

After completion of this article,  Laura Escobar and Allen Knutson \cite{EK} found
a proof for the formula (\ref{eq:multidegree}).
Their derivation in \cite{EK} rests on methods from representation theory.

\begin{example} \label{ex:compo4} \rm 
Let $n=4$. The multidegree for four concurrent lines equals
\begin{small}
$$ \begin{matrix} [V_4] & = &
4 t_1^3 t_2^3 t_3^2 t_4^1 + 
4 t_1^3 t_2^3 t_3^1 t_4^2  + 
4 t_1^3 t_2^2 t_3^3 t_4^1 + 
4 t_1^3 t_2^1 t_3^3 t_4^2 + 
4 t_1^3 t_2^2 t_3^1 t_4^3  + 
4 t_1^3 t_2^1 t_3^2 t_4^3  +  
4 t_1^2 t_2^3 t_3^3 t_4^1  + 
4 t_1^1 t_2^3 t_3^3 t_4^2  + \\ & & 
4 t_1^2 t_2^3 t_3^1 t_4^3  + 
4 t_1^1 t_2^3 t_3^2 t_4^3  +
4 t_1^2 t_2^1 t_3^3 t_4^3  + 
4 t_1^1 t_2^2 t_3^3 t_4^3 +  \
8 t_1^3 t_2^2 t_3^2 t_4^2 + 
8 t_1^2 t_2^3 t_3^2 t_4^2 + 
8 t_1^2 t_2^2 t_3^3 t_4^2 + 
8 t_1^2 t_2^2 t_3^2 t_4^3 .
\end{matrix}
$$
\end{small}
The first term in the multidegree represents the following four minimal primes of $M_4$:
$$ V(p_{12}, p_{13}, p_{23}, q_{12}, q_{13}, q_{23} ,\rho, r_{23}, \sigma) \,
\simeq \, \PP^2 \times \PP^2  \times \PP^3 \times \PP^4, 
\qquad (\rho,\sigma) \in \{r_{03}, r_{12}\} \times \{s_{03} , s_{12}\}.
$$
The last term in the multidegree represents the following eight minimal primes of $M_4$:
$$ V(p_{23},\pi, q_{03} ,\phi, r_{01}, \rho, s_{01}, s_{02}, s_{12})\simeq
\PP^3 \times \PP^3 \times \PP^3 \times \PP^2, 
\,(\pi,\phi,\rho) \in \{p_{03} {,} p_{12} \} \times   \{q_{02} {,} q_{13}\} \times
\{r_{03}{,}  r_{12}\}  .$$
All other irreducible components are similar. Each of the $80 = 8 \binom{5}{3}$ components 
is a product of projective spaces, defined by the vanishing of nine Pl\"ucker coordinates in
 $(\PP^5)^4$. $\diamondsuit$
\end{example}                                 

\section{Rational Cameras}
\label{sec4}

Let $C$ be a congruence of bidegree $(1,\beta)$. The {\em rational camera} defined by $C$ is the map
\begin{equation}\label{eq:cong_proj} \PP^3 \dashrightarrow C \subset {\rm Gr}(1,3)
\end{equation}
that associates a generic point $x$ in $\PP^3$ with the unique line in $C$ that passes through $x$. 
This map is defined everywhere except at the focal locus. We already noted that the focal locus of $C$
equals the fundamental locus, and its dimension is either zero or one.
We write $C(x)$ for the  image of $x$ under the map (\ref{eq:cong_proj}).
The point in $\PP^5$ that represents the line $C(x)$  in the Pl\"ucker embedding of ${\rm Gr}(1,3)$ 
is the intersection of  $C$ with the $\alpha$-plane associated with~$x$.

In this section we discuss the classification of order one congruences $C$,
and we derive some explicit formulas for the  rational maps $x \mapsto C(x)$.
We begin with the two easiest cases, pinhole and two-slit cameras, where the congurences are obtained by intersecting the Pl\"ucker quadric with linear spaces. We then move on to study rational cameras in full generality.

\subsection{Pinhole and Two-Slit Cameras}

If $C$ is a $(1,0)$-congruence, then $C$ is an $\alpha$-plane for some point $c$ in $\PP^3$, and \eqref{eq:cong_proj} represents a traditional pinhole camera. The 
image of a point $x$ is the line with Pl\"ucker coordinates 
\begin{equation}\label{eq:pinhole}
C(x) \,\, = \,\, x \vee c \,= \,\,\begin{bmatrix}
c_0 x_1 - c_1 x_0\\
c_0 x_2 - c_2 x_0\\
c_0 x_3 - c_3 x_0\\
c_1 x_2 - c_2 x_1\\
c_1 x_3 - c_3 x_1\\
c_2 x_3 - c_3 x_2\\
\end{bmatrix} \,\in \,{\rm Gr}(1,\PP^3). \end{equation}
There is a complete symmetry between the center $c$ and the projected point $x$, and if we write $C_c$ and $C_x$ for the $\alpha$-planes of lines through $c$ and $x$ respectively, then $\{x \vee c\} = C_c \cap C_x$.

Next, we consider a congruence $C$  that is defined
by two general linear forms in the six Pl\"ucker coordinates on
${\rm Gr}(1,\PP^3)$. Then $C$ is a  $(1,1)$-congruence. 
The line of all linear forms that vanish on $C$ intersects
the dual Grassmannian ${\rm Gr}(1,(\PP^3)^*) \subset (\PP^5)^*$ in two points
$p^*$ and $q^*$. The congruence is hence defined
by the corresponding lines, i.e.~we have
$\,C = \{ r \in {\rm Gr}(1,\PP^3)\,: \,p \vee r = q \vee r = 0 \}$.
We denote the primal Pl\"ucker coordinates of the two lines by
$$ p = ( p_{01} : p_{02} : p_{03} : p_{12} : p_{13} : p_{23} ) \quad \hbox{and} \quad
q = ( q_{01} : q_{02} : q_{03} : q_{12} : q_{13} : q_{23} ) . $$
Geometrically, the congruence $C$ is the family of 
 common transversals to $p$ and $q$.
 Each point of $\PP^3$ outside these two lines lies on a unique such transversal.
 Hence
the focal locus  $\mathcal{F}(C)$ is the union of the two lines $p$ and $q$.
 The associated rational camera is a {\em two-slit camera} \cite{BGP}. 
Note that any two skew lines $p$ and $q$ in $\PP^3$ define such a congruence of bidegree $(1,1)$.

Given a general point $x$ in $\PP^3$, the plane containing
$x$ and the line $p$  is the point in $(\PP^3)^*$ with coordinates
$x \vee p = Px$. Likewise, $x \vee q = Qx$ is the plane spanned by
the point $x$ and the line $q$. Here $P$ and $Q$ are
the skew-symmetric $4 \times 4$-matrices that represent $p$ and $q$.
Intersecting these two planes gives the line in the congruence that contains $x$.
In symbols,
\begin{equation}\label{eq:two_slit}
C(x) \,\,\,= \,\,\, (x \vee p) \wedge (x \vee q)\,\,=\,\, Pxx^TQ-Qxx^TP.
\end{equation}
The coordinates of the Pl\"ucker vector $C(x)$ are quadratic in the coordinates of $x$,
and they are bilinear in $(p,q)$. For instance, the first coordinate of $C(x)$, indexed by $01$, is equal to
$$ \begin{matrix} (q_{13} p_{23}-q_{23} p_{13}) x_0 x_2
+(q_{23} p_{12}-q_{12} p_{23}) x_0 x_3
+(q_{23} p_{03}-q_{03} p_{23}) x_1 x_2
+(q_{02} p_{23}-q_{23} p_{02}) x_1 x_3 \\
+(q_{03} p_{13}-q_{13} p_{03}) x_2^2
+(q_{12} p_{03}-q_{03} p_{12}-q_{02} p_{13}+q_{13} p_{02}) x_2 x_3
+(q_{02} p_{12}-q_{12} p_{02}) x_3^2.
\end{matrix}
$$
In summary, the picture of $x$ taken with the two-slit camera $C$ is the line given by (\ref{eq:two_slit}).

\begin{example}[Pushbroom cameras] \rm A {\em pushbroom camera}
  \cite{GH} is a device consisting of a linear array of sensors
  mounted on a platform that can move along a line perpendicular to
  the sensors. As the platform moves, the camera scans a family of
  viewing planes. This type of optical system is commonly used in
  aerial and satellite cameras as well as CT systems.

 It was observed in \cite{Pon} that pushbroom cameras are two-slit cameras where one of the two
 slits lies on the plane at infinity.  If we identify Euclidean 3-space with the affine chart $U_0  = \{x_0 \ne 0\}$ 
then  $q$ can be any line of the form
 $q  = (0:0:0:q_{12}:q_{13}:q_{23})$. A standard choice is the line at infinity
 that is orthogonal to $p$, with respect to the
   usual scalar product on $U_0 \simeq \RR^3$.
   That line has the Pl\"ucker coordinates $q= ( 0 : 0 : 0 : p_{03} : -p_{02} : p_{01} )$.
For this choice of $q$, the polynomial formula \eqref{eq:two_slit} for
the image line $C(x)$ specializes to
$$
{\scriptsize 
\begin{bmatrix}
 -p_{02} p_{12} x_0^2-p_{03} p_{13} x_0^2+p_{02}^2 x_0 x_1+p_{03}^2 x_0 x_1-p_{01} p_{02} x_0 x_2-p_{01} p_{03} x_0 x_3 \\
 p_{01} p_{12} x_0^2-p_{03} p_{23} x_0^2-p_{01} p_{02} x_0 x_1+p_{01}^2 x_0 x_2+p_{03}^2 x_0 x_2-p_{02} p_{03} x_0 x_3\\
p_{01} p_{13} x_0^2+p_{02} p_{23} x_0^2-p_{01} p_{03} x_0 x_1-p_{02}p_{03} x_0 x_2+p_{01}^2 x_0 x_3+p_{02}^2 x_0 x_3\\
p_{01} p_{12} x_0 x_1-p_{03} p_{23} x_0 x_1-p_{01}p_{02}x_1^2+p_{02}p_{12} x_0 x_2+p_{03}p_{13}x_0 x_2{+}p_{01}^2x_1x_2{-}p_{02}^2 x_1 x_2{+}p_{01} p_{02} x_2^2{-}p_{02} p_{03}x_1 x_3{+}p_{01}p_{03}x_2 x_3\\
p_{01} p_{13} x_0 x_1+p_{02} p_{23} x_0 x_1-p_{01} p_{03} x_1^2-p_{02} p_{03} x_1 x_2+p_{02} p_{12} x_0 x_3{+}p_{03} p_{13} x_0 x_3{+}p_{01}^2 x_1 x_3{-}p_{03}^2 x_1 x_3{+}p_{01}p_{02}x_2 x_3{+}p_{01}p_{03}x_3^2\\
p_{01} p_{03} x_1 x_2-p_{01} p_{13} x_0 x_2-p_{02} p_{23} x_0 x_2+p_{02
}p_{03} x_2^2+p_{01} p_{12} x_0 x_3{-}p_{03} p_{23} x_0 x_3{-}p_{01} p_{02} x_1 x_3 {-}p_{02}^2 x_2 x_3{+}p_{03}^2 x_2 x_3{-}p_{02} p_{03} x_3^2
\end{bmatrix}
}.
$$
This Pl\"ucker vector represents the picture of the point $x$ taken by the pushbroom camera.~$\diamondsuit$
\end{example}

The $(1,1)$-congruences $C$ we consider are defined over the
real numbers.  From the perspective of real algebraic
geometry, one distinguishes the following three possibilities for
the focal locus.  The two lines $p$ and $q$ in  $\mathcal{F}(C)$
may be real and distinct, real and coincide (when the line
of linear forms defining $C$ intersects ${\rm Gr}(1,\PP^3)$ in a double
point), or they may form a complex conjugate pair of lines.
  In the first case, the $(1,1)$-congruence $C$ is {\em
  hyperbolic}.  This includes the pushbroom cameras. In the second
case, $C$ is said to be {\em parabolic}, and consists of a
one-parameter family of flat pencils of lines centered on the line
$p=q$. In the last case, the focal locus $\mathcal{F}_C$ has no real
points, and the $(1,1)$-congruence $C$ is said to be {\em elliptic}. We refer to~\cite{BGP}
for a more detailed presentation of the real geometry of linear cameras.

\subsection{Congruences of Order One and Higher Class}

We now consider $(1,\beta)$-congruences for any $\beta$. These 
were classified in 1866 by Kummer~\cite{Kum}. His result
was then refined and extended by various authors in the 20th century.
The following version was derived by De Poi in \cite{DeP}.
We refer to his article for more information.

\begin{theorem}\label{th:classify_cong}
Let  $\,C$ be a $(1,\beta)$-congruence with focal locus $\mathcal F(C)$.
Then one of the following four situations is the case:
\begin{enumerate}
\item $\mathcal F(C)$ is a point $c$, and $C$ is the $\alpha$-plane of lines through $c$. Here $\beta = 0$.

\item $\mathcal F(C)$ is a twisted cubic in $\PP^3$, and $C$ consists of its secant lines.
Here $\beta = 3$.
\item $\mathcal F(C)$ is the union of a rational curve $X$ of degree $\beta$ and a line $L$ that intersects $X$ in $\beta-1$ points. The congruence $C$ is the family of lines that intersects both $L$ and $X$.
Here we allow for degenerate cases:  the points in $X \cap L$ are counted with multiplicity.
\item $\mathcal F(C)$ is (a non-reduced) line $L$. The congruence $C$ is described by a 
morphism $\phi$ of degree $\beta > 0$ from $L^*$ to $L$, where $L^*$ denotes the planes containing $L$: a line is in $C$ if it belongs to a pencil of lines lying in a plane $\Pi$ in $L^*$ and passing through $\phi(\Pi)$.
\end{enumerate}
\end{theorem}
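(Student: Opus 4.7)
The classification above is due to Kummer \cite{Kum} and is presented in the modern form given here by De Poi \cite{DeP}; my plan is to outline the geometric argument and indicate which points require the detailed work found in those sources.

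I would begin by invoking the fact cited just before Theorem~\ref{th:classify_cong} (from \cite[Proposition 2]{DM}) that, because the order of $C$ is one, the focal locus coincides with the fundamental locus and has dimension at most one. This splits the analysis into two regimes according to $\dim \mathcal{F}(C) \in \{0,1\}$. In the zero-dimensional regime, the rational map $x \mapsto C(x)$ is birational onto $C$, and its indeterminacy locus cannot be empty (otherwise $\PP^3$ would be a smooth $\PP^1$-bundle over a rational surface, which is impossible). Irreducibility of $C$ together with the order-one hypothesis then forces the fundamental locus to be a single point $c$, so every line of $C$ passes through $c$ and $C$ is the $\alpha$-plane of $c$ with $\beta = 0$, yielding case (1).

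In the one-dimensional regime, I would apply the fundamental theorem on congruences stated just before Example~\ref{ex:twotwo}: if $\mathcal F(C) = Y$ is a curve, then $C$ is an irreducible component of the secant congruence $\mathcal S(Y)$. The order-one condition constrains $Y$ very strongly, and the enumeration divides naturally according to the structure of $Y$. If $Y$ is smooth and irreducible, the classical formula $\binom{d-1}{2} - g$ for the order of the secant congruence in terms of the degree $d$ and genus $g$ forces $(d,g) = (3,0)$, recovering the twisted cubic of case (2). If $Y$ is reduced with a linear component $L$ and a residual component $X$, a parameter count for lines meeting both components through a general point forces $X$ to be rational of degree $\beta$ and to meet $L$ with multiplicity $\beta - 1$, giving case (3). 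If $Y$ is supported on a non-reduced line $L$, the embedded data along $L$ packages into a finite morphism $\phi : L^* \to L$, whose degree is precisely the class $\beta$ of $C$, giving case (4).

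The main obstacle is the final enumeration: ruling out every other one-dimensional focal locus, most notably singular irreducible curves of higher degree and reducible configurations without a linear component. This requires the careful degree-genus and intersection-theoretic bookkeeping performed in \cite{DeP}, to which I would refer the reader for the complete verification.
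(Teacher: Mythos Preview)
The paper does not actually prove Theorem~\ref{th:classify_cong}; it simply attributes the classification to Kummer \cite{Kum} and De Poi \cite{DeP} and refers the reader there. Your proposal is therefore more than the paper offers: you sketch the standard argument (split on $\dim\mathcal F(C)$, invoke the fundamental theorem on congruences, use the secant formula $\binom{d-1}{2}-g$ to isolate the twisted cubic, and handle the reducible and non-reduced cases separately), while the paper gives no argument at all. Your outline is broadly sound and in the spirit of the cited sources, and your concession that the exhaustive enumeration requires the detailed work in \cite{DeP} is exactly where the paper itself leaves matters.
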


\noindent We next describe the rational cameras (\ref{eq:cong_proj})
for each of these families of congruences.

\medskip

\noindent {\bf Type 1: $\mathcal F(C)$ is a point.} This is the pinhole camera described in Section 4.1.

\medskip

\noindent {\bf Type 2: $\mathcal F(C)$ is a twisted cubic.} 
After a change of coordinates,  the twisted cubic in $\PP^3$ is the image of the map
 $\,( s : t ) \mapsto ( s^3 : s^2t : st^2 : t^3)$.
 The corresponding rational camera~is
 \begin{small}
 \begin{equation}\label{eq:proj_cubic}
C(x) \,\,\, = \,\,\,
\begin{bmatrix}
(x_0 x_2-x_1^2)^2\\
(x_0x_2-x_1^2) (x_0 x_3-x_1x_2)\\
x_0 x_2^3+x_1^3 x_3 -3 x_0 x_2 x_1x_3+x_0^2x_3^2\\
(x_1 x_3-x_2^2)(x_0x_2-x_1^2)\\
(x_1 x_3-x_2^2)(x_0x_3-x_1x_2)\\
(x_1 x_3-x_2^2)^2
\end{bmatrix}.
\end{equation}
\end{small}
The ideal of the congruence $C$ is generated by the Pl\"ucker relation together with five quadrics
\begin{equation}\label{eq:secant_cubic}
p_{13}^2-p_{03} p_{23}-p_{12} p_{23}, \,\,\, p_{12} p_{13}-p_{02} p_{23}, \,\,\, p_{12}^2-p_{01} p_{23},
  \,\,\,   p_{02} p_{12}-p_{01} p_{13}, \,\,\, p_{02}^2-p_{01} p_{03}-p_{01} p_{12}.
\end{equation}
If we augment this ideal
by the four entries of  $P x$, where $x= (x_0 : x_1 : x_2 : x_3)$ 
is an unknown world point in $\PP^3$, then the radical of the resulting ideal is generated by  the quadrics in \eqref{eq:secant_cubic} together with six bilinear equations that can be written in matrix-vector form as follows:
\begin{equation}
\label{eq:66linsys}
{\small
\begin{bmatrix}
0 & 0 & 0 & x_3 & -x_2 & x_1\\
0 & 0 & 0 & x_2 &-x_1 & x_0\\
0 & x_3 & -x_2 & 0 & 0 & x_0\\
0 & x_2 & -x_1 &-x_1 & x_0 &0\\
x_3 & 0 & -x_1 & 0 & x_0 & 0\\
x_2 & -x_1 & 0 & x_0 & 0 & 0
\end{bmatrix}
\begin{bmatrix}
p_{01}\\
p_{02}\\
p_{03}\\
p_{12}\\
p_{13}\\
p_{23}
\end{bmatrix}=
\begin{bmatrix}
0\\ 0 \\ 0 \\ 0 \\ 0 \\ 0
\end{bmatrix}.
}
\end{equation}
This $6\times 6$ matrix has rank $5$. The solution space of (\ref{eq:66linsys})
is spanned by the vector  in (\ref{eq:proj_cubic}). Inside $\PP^5$, the secant congruence of the twisted cubic is a Veronese surface \cite[Proposition~1]{DeP}. 

The {\em twisted cubic camera} (\ref{eq:proj_cubic}) has a nice interpretation
in terms of tensor decompositions. For this, we identify $\PP^3$ with the
space  of symmetric $2 \times 2 \times 2$-tensors. We 
seek to decompose an arbitrary tensor as 
the sum of two rank $1$ tensors. Equivalently, we seek to write
a binary cubic $x_0 u^3 + 3 x_1 u^2 v + 3 x_2 u v^2 + x_3 v^3$
as the sum of two cubes of linear forms in $u$ and $v$.
Rank~$1$ tensors are points on the twisted cubic curve.
The desired representation is unique.
It is given by the
intersection points of the twisted cubic with the secant line $C(x)$.


\medskip

\noindent {\bf Type 3: $\mathcal F(C)$ is a rational curve $X$ and a line $L$.} 
After a change of coordinates we may assume that the line is
$L = \{ (0:0:x_2:x_3) \in \PP^3 \,:\, (x_2:x_3) \in \PP^1 \} $.
The dual line $L^*$ parametrizes all planes in $\PP^3$ that contain $L$.
A natural parametrization  $\PP^1 \rightarrow L^*$ is given by
identifying $(x_0:x_1)$ with the plane in $\PP^3$ with dual coordinates $(x_1,-x_0,0,0)$.

To build our rational camera, we take an arbitrary
 rational curve $X$ of degree $\beta$ that intersects $L$
in $\beta-1$ points. Each such curve $X$ is given by a parametric representation 
\begin{equation}
\label{eq:Xpara}
 \PP^1 \rightarrow X , \,(s:t) \mapsto \bigl( s f(s,t): t f(s,t): g(s,t): h(s,t) \bigr ), 
 \end{equation}
where $f,g$ and $h$ are arbitrary binary forms of degree $\beta-1$, $\beta$ and $\beta$
respectively.

\begin{proposition} For the rational camera of Type 3, the map (\ref{eq:cong_proj}) is
given by
\begin{equation}
\label{eq:type3C}
 C(x) \,\,= \,\, \begin{bmatrix} x_0 \\ x_1 \\ x_2 \\ x_3 \end{bmatrix} 
\, \vee \,\begin{bmatrix} x_0 f(x_0,x_1) \\ x_1 f(x_0,x_1) \\ g(x_0,x_1) \\ h(x_0,x_1) \end{bmatrix} .
\end{equation}
\end{proposition}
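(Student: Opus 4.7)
The plan is to identify $C(x)$ geometrically, using the parametrization of the pencil of planes through $L$ and the parametrization of $X$.

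First I would note that for a generic $x=(x_0{:}x_1{:}x_2{:}x_3)$ in $\PP^3$, there is a unique plane $\Pi_x$ containing both $L$ and $x$. Under the parametrization $\PP^1\to L^*$ given in the excerpt, $\Pi_x$ corresponds to $(x_0{:}x_1)$; explicitly, $\Pi_x$ is cut out by $x_1 X_0 - x_0 X_1 = 0$. Since any line of $C$ through $x$ has to meet $L$ at some point $\neq x$, such a line must be contained in $\Pi_x$.

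Second, I would compute $X\cap \Pi_x$. Substituting the parametrization (\ref{eq:Xpara}) into $x_1 X_0 - x_0 X_1$ gives
\begin{equation*}
x_1\!\cdot\! s f(s,t) - x_0\!\cdot\! t f(s,t) \;=\; f(s,t)\,(x_1 s - x_0 t).
\end{equation*}
The factor $f(s,t)$ vanishes precisely at the $\beta-1$ points of $X\cap L$ (counted with multiplicity), which are the base points of the pencil of planes through $L$ and hence lie on every such plane. The remaining factor vanishes only at $(s{:}t)=(x_0{:}x_1)$, yielding a single new intersection point
\begin{equation*}
y \;=\; \bigl(x_0 f(x_0,x_1):\,x_1 f(x_0,x_1):\,g(x_0,x_1):\,h(x_0,x_1)\bigr).
\end{equation*}

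Third, I would conclude. A line of $C$ through $x$ must meet $X$ at a point of $X\cap \Pi_x$ distinct from $x$. For generic $x$ we have $f(x_0,x_1)\neq 0$, so $y$ is well-defined and not on $L$; hence $y$ is the unique admissible intersection point (choosing one of the $\beta-1$ points of $X\cap L$ would force the line to lie in $L\vee x$ and coincide with the degenerate line $x\vee \ell$ for some $\ell\in L$, which does not meet $X$ outside $L$ for generic $x$). The line $x\vee y$ lies in the plane $\Pi_x$, and therefore meets $L$ (any two lines in a projective plane meet), and it meets $X$ at $y$ by construction. So $C(x)=x\vee y$, which is precisely the formula (\ref{eq:type3C}).

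I do not expect a serious obstacle: the argument is elementary once the pencil parametrization $\Pi_x\leftrightarrow (x_0{:}x_1)$ is used, because the factorization $f(s,t)(x_1 s-x_0 t)$ separates the $\beta-1$ base points from the one ``moving'' intersection point. The only point requiring care is the genericity hypothesis, namely that $f(x_0,x_1)\neq 0$ and that $x$ does not lie on $\mathcal F(C)=X\cup L$, which excludes a proper closed subset of $\PP^3$ and is harmless since $C(x)$ is defined as a rational map.
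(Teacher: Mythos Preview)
Your argument is correct and follows essentially the same route as the paper. The paper's proof is a terse verification: it simply observes that both column vectors in (\ref{eq:type3C}) lie in the plane of $L^*$ with parameter $(x_0{:}x_1)$, that the second vector lies on $X$ by (\ref{eq:Xpara}), and hence the line meets both $L$ and $X$; uniqueness then follows from the order-one hypothesis. Your version is a derivation rather than a verification---you compute $X\cap\Pi_x$ via the factorization $f(s,t)(x_1 s - x_0 t)$ to \emph{find} the point $y$---but the underlying geometric idea (both points lie in $\Pi_x$, the second is on $X$) is identical, and your extra discussion of the $\beta-1$ base points, while correct, is not strictly needed once order one is invoked.
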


\begin{proof}
The two column vectors in (\ref{eq:type3C}) represent two points in $\PP^3$
that lie on the plane in $L^*$ with coordinates $(x_0 : x_1)$, according to the
parametrization above.
We see from (\ref{eq:Xpara}) that
the second point lies on the curve $X$.
Hence  (\ref{eq:type3C}) is a line
the intersects both $L$ and $X$.
\end{proof}

\begin{remark} \label{rem:Chow} \rm
We now describe the ideal of the congruence $C$ 
in the coordinate ring of the Grassmannian ${\rm Gr}(1,\PP^3)$.
To do this, we use the concept of Chow forms, as described in \cite{DS}.
Recall that {\em Chow form} ${\rm Ch}_Z$ of an irreducible curve $Z$ of degree $\gamma$ in $\PP^3$ is a
 hypersurface of degree $\gamma$ in the Grassmannian ${\rm Gr}(1,\PP^3)$. Its points are 
all the lines in $\PP^3$ that intersect $Z$.

With this notation, the ideal of $C$ is the saturation of
$ \langle {\rm Ch}_{L} , {\rm Ch}_{X} \rangle  $
with respect to
$ \cap_{i=1}^{\beta-1} \langle P u_{i} \rangle $, where  $u_{i}$ are the intersections between $L$ and $X$, and $Ch_L$ and $Ch_X$ are the  Chow forms of $L$ and $X$ respectively. Hence the ideal $\langle {\rm Ch}_{L} , {\rm Ch}_{X} \rangle$
represents all lines that intersect both $L$ and $X$. The saturation removes $\beta-1$ extraneous components, namely the $(1,0)$-congruences of lines passing through the points $u_{i}$.
 We conjecture that the resulting ideal is generated by the Pl\"ucker quadric,
the linear Chow form ${\rm Ch}_L$, and $\beta$ linearly independent forms of degree~$\beta$ (including ${\rm Ch}_X$). This description was observed experimentally.
\end{remark}

\begin{example} \label{ex:type3} \rm
Fix $\beta = 3$ and let $X$ be the twisted cubic curve
given as in (\ref{eq:Xpara}) with
$\,f=(s-t)(s+t), \,g = s^3 $ and $ h=t^3$. The ideal of $X$ is generated by the
$2 \times 2$-minors of
\begin{equation}
\label{eq:twobythree} \begin{pmatrix}
     x_1+x_3 & x_2-x_0 & x_3    \\
       x_2  & x_1+x_3 & x_2-x_0  \\
\end{pmatrix} . 
\end{equation}
The line $L = V(x_0,x_1)$ meets the curve $X$ in the two points
$(0:0:1:1)$ and $(0:0:1:-1)$.
 The corresponding $(1,3)$-congruence $C$ is
 parametrized by (\ref{eq:type3C}).
  The ideal of $C$ equals
\begin{equation}
\label{eq:type3ideal} \begin{matrix}
\bigl\langle \,p_{01}\,, \, p_{03} p_{12} -p_{02} p_{13}\,, \,
   p_{02} p_{03}^2-p_{12}^2 p_{13} - p_{02} p_{03} p_{23}+p_{12} p_{13} p_{23}, \,\,\,\, \\ \quad
 p_{03}^3-p_{12} p_{13}^2-p_{03}^2 p_{23}+p_{13}^2 p_{23}\,, \,   
      p_{02}^2 p_{03} - p_{12}^3 - p_{02}^2 p_{23} + p_{12}^2 p_{23}    \bigr\rangle.
      \end{matrix} 
      \end{equation}
\end{example}

\smallskip

\noindent {\bf Type 4: $\mathcal F(C)$ is a non-reduced line $L$.} 
This is the degenerate case of Type 3 congruences when the binary form $f $ 
is identically zero.
The degree $\beta$ morphism $\phi : L^* \rightarrow L$
promised in Theorem~\ref{th:classify_cong}
sends  $(x_0:x_1)$ to the point $(0:0:x_2:x_3) = \bigl(0:0:g(x_0,x_1) : h(x_0,x_1 )\bigr)$
on the line $L \subset \PP^3$. The corresponding rational camera is given by
the formula (\ref{eq:type3C}) with $f=0$.

\begin{example} \label{ex:type4} \rm
Let $\beta = 3$ as in Example \ref{ex:type3} but now
with $f = 0, \,g =s^3$ and $h = t^3$.
The non-reduced structure of $L$ is the ideal
$\langle x_0^2, x_0 x_1, x_1^2  \rangle $, obtained from (\ref{eq:twobythree}) by setting $x_3=0$.
The resulting $(1,3)$-congruence $C$ is a toric surface
in ${\rm Gr}(1,\PP^3) \subset \PP^5$. Its prime ideal equals
 $$ 
\bigl\langle \,p_{01}\,, \, p_{03} p_{12} -p_{02} p_{13}\,, \,\,
   p_{02} p_{03}^2-p_{12}^2 p_{13} \,,\,\,
   p_{03}^3-p_{12} p_{13}^2 \,,\,\,
      p_{02}^2 p_{03} - p_{12}^3 \,
         \bigr\rangle.
         $$
Note that the three binomial cubics are the initial forms of the three cubics in           (\ref{eq:type3ideal}).
\hfill $\diamondsuit$
\end{example}

\section{Multi-Image Varieties}
\label{sec5}

In this section, we use the concurrent lines variety $V_n$ from Section \ref{sec3} to 
characterize multi-view correspondences for $n$ rational cameras. 
We fix congruences $C_1,\ldots, C_n \subset {\rm Gr}(1,\PP^3)$,
where $C_i$ has bidegree $(1,\beta_i)$
for some $\beta_i \in \mathbb{N}$.
Combining  their maps as in  (\ref{eq:cong_proj}) gives
\begin{equation}
\label{eq:mapCCC}
\PP^3 \dashrightarrow C_1 \times \cdots \times C_n \,\, ,\,\,\, x \mapsto \bigl(C_1(x),\ldots,C_n(x) \bigr).
\end{equation}
The base locus of this rational map is the product of the focal loci,
$\mathcal{F}(C_1) \times \cdots \times \mathcal{F}(C_n)$.
We define the {\em multi-image variety} 
$M(C_1,\ldots,C_n)$ to be the closure of the image of  (\ref{eq:mapCCC}).
This is an irreducible subvariety in the product of Grassmannians
$\,{\rm Gr}(1,\PP^3)^n \subset (\PP^5)^n$.
We expect the map (\ref{eq:mapCCC}) to be birational in almost all cases,
so $M(C_1,\ldots,C_n)$ is a threefold.

The multi-image variety is clearly contained in the concurrent lines variety. In symbols,
\begin{equation}
\label{eq:MCCC}
M(C_1,\ldots,C_n) \,\subseteq \,
V_n \,\cap\, (C_1 \times \cdots \times C_n) 
\,\,\subset \,\,{\rm Gr}(1,\PP^3)^n.
\end{equation}
Our first result in this section shows that  the left inclusion in (\ref{eq:MCCC}) is usually an equality.

\begin{theorem}
\label{thm:pairwisedisjoint}
Suppose that the $n$ focal loci $\mathcal{F}(C_1),\ldots,\mathcal{F}(C_n)$
are pairwise disjoint. Then
\begin{equation}
\label{eq:MequalsV}
M(C_1,\ldots,C_n) \,= \,
V_n \,\cap\, (C_1 \times \cdots \times C_n),
\end{equation}
i.e.,~the concurrent lines variety gives an implicit representation of the multi-image variety.
\end{theorem}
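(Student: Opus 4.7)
The plan is to establish the reverse inclusion $V_n \cap (C_1 \times \cdots \times C_n) \subseteq M(C_1, \ldots, C_n)$, the other direction being $(\ref{eq:MCCC})$. Pick a tuple $(\ell_1, \ldots, \ell_n)$ in the left-hand side and a common intersection point $x \in \PP^3$ of the $\ell_i$. If $x$ avoids every focal locus then, since each $C_i$ has order one, $\ell_i$ is forced to be the unique line of $C_i$ through $x$, namely $C_i(x)$, so the tuple equals $(C_1(x), \ldots, C_n(x))$ and lies in the image of the map $(\ref{eq:mapCCC})$, hence in $M$. The only remaining case is $x \in \mathcal{F}(C_i)$ for some $i$, and by the pairwise-disjointness hypothesis this index $i$ is \emph{unique}; thus $C_j$ is regular at $x$ and $\ell_j = C_j(x)$ for every $j \neq i$.

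For this case I would produce an algebraic arc $x(t)$ in $\PP^3$ with $x(0) = x$, $x(t) \notin \bigcup_k \mathcal{F}(C_k)$ for $t \neq 0$, and $C_i(x(t)) \to \ell_i$ as $t \to 0$. Continuity of $C_j$ at $x$ for $j \neq i$ then forces $(C_1(x(t)), \ldots, C_n(x(t))) \to (\ell_1, \ldots, \ell_n)$, and since each term on the left lies in the image of $(\ref{eq:mapCCC})$ the limit lies in the closed variety $M(C_1, \ldots, C_n)$. Avoiding $\mathcal{F}(C_j)$ for $j \neq i$ along the arc is automatic in a small neighborhood of $x$, since by hypothesis $x \notin \mathcal{F}(C_j)$ for these $j$.

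The arc is produced from the incidence variety $Z_i := \{(y, \ell) \in \PP^3 \times C_i : y \in \ell\}$. This is a family of $\PP^1$'s (the lines themselves) over the irreducible surface $C_i$, and is therefore itself irreducible of dimension three. Because $C_i$ has order one, the projection $\pi : Z_i \to \PP^3$ is generically one-to-one and hence birational onto $\PP^3$, so $Z_i$ coincides with the closure in $\PP^3 \times C_i$ of the graph of the rational map $y \mapsto C_i(y)$. The point $(x, \ell_i)$ lies in $Z_i$ merely because $x \in \ell_i \in C_i$, and so it lies in the closure of the open dense subset $Z_i \setminus \pi^{-1}(\mathcal{F}(C_i))$; curve selection inside the irreducible variety $Z_i$ yields a curve through $(x, \ell_i)$ whose generic point lies in that open subset, and its image under $\pi$ is the desired arc $x(t)$. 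The main obstacle the strategy has to overcome is the possibility that $x$ lies in two focal loci simultaneously: then a single one-parameter deformation of $x$ could not in general be steered to make $C_i(x(t)) \to \ell_i$ and $C_{i'}(x(t)) \to \ell_{i'}$ for two independently prescribed lines $\ell_i, \ell_{i'}$, and the inclusion could genuinely fail. The disjointness hypothesis on the focal loci is precisely the condition that precludes this pathology and reduces the problem to the clean $\PP^1$-fibration geometry of a single $Z_i$.
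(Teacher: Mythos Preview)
Your argument is correct and follows the same strategy as the paper: handle the generic case directly, then for $x$ in a unique focal locus $\mathcal{F}(C_i)$ approximate $(x,\ell_i)$ by a one-parameter family along which the rational camera $C_i$ is defined and converges to $\ell_i$. The paper simply asserts the existence of such an approximating sequence $(x_\epsilon,L_{i,\epsilon}) \to (x,\ell_i)$ with $C_i(x_\epsilon)=L_{i,\epsilon}$; your use of the irreducible incidence variety $Z_i$ and curve selection supplies the justification that the paper leaves implicit.
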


\begin{proof}
By (\ref{eq:MCCC}), we only need to show one direction.
For $(L_1,\ldots,L_n) \in  V_n \cap M(C_1,\ldots,C_n)$,
there exists $x \in \PP^3$ such that
$x \in L_i$ for all $i$. If $x$ does not lie in any of the $n$ focal loci 
then $L_i = C_i(x)$ and we are done. Otherwise, $x$ lies in
exactly one of the focal loci, say, $x \in \mathcal{F}(C_i)$.
We then consider a sequence of nearby pairs $(x_\epsilon,L_{i,\epsilon})$ 
that converges to $(x,L_i)$ and satisfies
 $x_\epsilon \in L_{i,\epsilon} \backslash \mathcal{F}(C_i)$
 and $C_i(x_\epsilon) = L_{i,\epsilon}$.
 For each $j \in \{1,2,\ldots,n\} \backslash \{i\}$ the locus
 $\mathcal{F}(C_j)$ is closed. Since it does not contain $x$,
 we can assume that it also
   does not contain $x_\epsilon$. Hence
$ (C_1(x_\epsilon),\ldots,C_n(x_\epsilon))$ is a well-defined
sequence of points in the variety $M(C_1,\ldots,C_n)$. It converges
to $(L_1,\ldots,L_n)$, which therefore also lies in $M(C_1,\ldots,C_n)$.
\end{proof}

We next undertake
a detailed study of two special cases.
Subsection \ref{subsec51} concerns arbitrary $n$, but $\beta_i \in \{0,1\}$.
In Subsection \ref{subsec52} we focus on 
$n=2$, but with arbitrary $\beta_1$ and $ \beta_2$.

\subsection{Multiple Views with Pinhole and Two-Slit Cameras}
\label{subsec51}

We begin with an example for $n=3$ that shows the necessity of the
 hypothesis on the focal loci in
Theorem \ref{thm:pairwisedisjoint}.
The concurrent lines variety $V_3$ is a $9$-dimensional
subvariety of $\PP^5 \times \PP^5 \times \PP^5$.
Its ideal $I_3$ is generated by six quadrics and ten cubics in
$\RR[p_{01},\ldots,p_{23},q_{01},\ldots,q_{23},r_{01},\ldots,r_{23}]$.
Given three congruences $C_1,C_2,C_3$, we are interested in the
variety  $  (C_1 \times C_2 \times C_3) \cap V_3$.
This contains the threefold $M(C_1,C_2,C_3)$, possibly strictly.

\begin{example} \label{ex:sixlines} \rm
Let $\beta_1=\beta_2 = \beta_3 = 1$ and fix the two-slit cameras $C_1,C_2,C_3$
defined by
$$ J \,\,=\,\, \langle p_{01},p_{23}, q_{02},q_{13}, r_{03},r_{12} \rangle \quad
\subset \quad
\RR[p_{01},\ldots,p_{23},q_{01},\ldots,q_{23},r_{01},\ldots,r_{23}].
$$
Geometrically, we partition the set of six coordinate lines in $\PP^3$
into three pairs of disjoint lines. Each pair defines a $(1,1)$-congruence.
Note that $\mathcal{F}(C_1)$, $ \mathcal{F}(C_2)$ and $ \mathcal{F}(C_3)$ are
distinct, but they intersect in the four coordinate points. So, the hypothesis of
Theorem \ref{thm:pairwisedisjoint} fails.

The ideal $J+I_3 $ is radical but not prime. It is the intersection of
five primes, each defining a threefold in $\PP^5 {\times} \PP^5 {\times} \PP^5$.
One of these is the toric variety $M(C_1,C_2,C_3)$, with ideal
$$ \begin{matrix}
\langle
  p_{03} p_{12}-p_{02} p_{13}, q_{03} q_{12}+q_{01} q_{23}, r_{02} r_{13}-r_{01} r_{23},
   p_{03}    q_{12} + p_{12} q_{03} , p_{13} r_{02}+p_{02} r_{13}, q_{23} r_{01}+q_{01} r_{23}, \\
      p_{12} q_{23} r_{13}+p_{13} q_{12} r_{23},  \, p_{02} q_{23} r_{13}+p_{03} q_{12} r_{23},\,
       p_{03} q_{12} r_{13}+p_{13} q_{01} r_{23}, \,
      p_{02} q_{12} r_{13}+p_{12} q_{01} r_{23},  \\
\!\!\!   \!\!\!         p_{02} q_{03} r_{13}-p_{03} q_{01} r_{23},\,\,\,
             p_{03} q_{23} r_{02} +p_{02} q_{03} r_{23},\,\,\,
      p_{03} q_{12} r_{02} - p_{02} q_{01}  r_{23},   \\
\qquad       p_{03} q_{12} r_{01} - p_{02} q_{01}  r_{13},\,\,\,
       p_{02} q_{12} r_{01} + p_{12} q_{01} r_{02},\,\,\,
      p_{13} q_{03} r_{01} + p_{03} q_{01} r_{13}
               \,\rangle \,\,+\,\, J.
 \end{matrix}
$$
The other four associated primes define coordinate $3$-planes in $\PP^5 \times \PP^5 \times \PP^5$.
They are
$$ \begin{matrix} 
\langle p_{12},p_{13}, q_{12},q_{23}, r_{13}, r_{23} \rangle + J , &
 \langle p_{02},p_{03}, q_{03},q_{23}, r_{02},r_{23} \rangle + J, \\
 \langle p_{03},p_{13}, q_{01},q_{03}, r_{01},r_{13} \rangle + J, &
 \langle p_{02},p_{12}, q_{01},q_{12}, r_{01},r_{02} \rangle + J. 
 \end{matrix}
$$ 
To understand the geometric meaning of these extraneous components,
consider the last ideal. It represents all
triples $(L_1,L_2,L_3)$ where
$L_1,L_2,L_3$ pass through $(0{:}0{:}0{:} 1)$,
and each line $L_i$ intersects one of  the opposite coordinate lines,
as is required for lines in $C_i$.
\hfill $\diamondsuit$
\end{example}

From now on we consider congruences whose focal loci are pairwise disjoint,
so the identity~(\ref{eq:MequalsV}) holds.
We begin with the most classical case, where $C_1, \ldots, C_n$ are pinhole cameras with distinct centers $c_1,\ldots, c_n$.
Each congruence $C_i$ is a plane $\PP^2$ in $\PP^5$, and the
map $x \mapsto C_i(x) = x \vee c_i $ is analogous to  the linear projection 
$\PP^3 \dashrightarrow \PP^2$ with center $c_i$.
In the usual set-up of photographic cameras \cite{AST, THP},  this map is
represented by a $3 \times 4$-matrix $A_i$ whose kernel is given by
$c_i$, and $\PP^2$ is identified with the image of $A_i$.
Since $A_i$ and $x \mapsto x \vee c_i$ have the same kernel, 
there exists a $6 \times 3$-matrix $B_i$ such that $x \vee c_i = B_i A_i x $; see also Section~\ref{sec7}.

\begin{proposition} \label{prop:multiview}
The multi-view variety (in the sense of \cite{AST,THP})
of the photographic cameras $A_1,\ldots,A_n$
 is isomorphic to the multi-image variety
$M(C_1,\ldots,C_n)$ under the map
$$ (\PP^2)^n \rightarrow {\rm Gr}(1,\PP^3)^n,\,\,
(u_1, \ldots, u_n) \mapsto (B_1 u_1, \ldots, B_n u_n) . $$
Here, the equation  (\ref{eq:MequalsV}) holds ideal-theoretically,
i.e.,~the prime ideal of the multi-view variety is the image of $I_n$
modulo the linear equations $P_1 c_1 = \cdots = P_n c_n = 0$ that define $C_1 \times \cdots \times C_n$.
\end{proposition}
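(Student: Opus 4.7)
The proof splits naturally into a variety-level part establishing the bijection $\psi$, and an algebra-level part upgrading to an ideal-theoretic identity.

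\textbf{Step 1 (the map $\psi$ is an isomorphism onto $C_1 \times \cdots \times C_n$).} Each $C_i$ is a $(1,0)$-congruence, hence the $\alpha$-plane of lines through $c_i$, cut out in $\PP^5$ by the four linear equations $P_i c_i = 0$. These have rank three by skew-symmetry of $P_i$, so $C_i$ is a linear $\PP^2 \subset \PP^5$. The matrix $B_i$ has rank three and its columns span exactly this $\PP^2$, so $u_i \mapsto B_i u_i$ is a linear isomorphism $\PP^2 \to C_i$. Taking products, $\psi : (\PP^2)^n \to C_1 \times \cdots \times C_n$ is an isomorphism of projective varieties.

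\textbf{Step 2 (compatibility of the view maps).} By construction $B_i A_i x = x \vee c_i = C_i(x)$ on the open set where both sides are defined. Hence $\psi$ carries the classical multi-view map $x \mapsto (A_1 x, \ldots, A_n x)$ onto the rational map (\ref{eq:mapCCC}). Zariski closure identifies the multi-view variety with $M(C_1,\ldots,C_n)$. Since the focal loci $\{c_1\},\ldots,\{c_n\}$ are distinct points, they are pairwise disjoint, so Theorem~\ref{thm:pairwisedisjoint} yields the set-theoretic equality $M(C_1,\ldots,C_n) = V_n \cap (C_1 \times \cdots \times C_n)$.

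\textbf{Step 3 (ideal-theoretic statement).} The ideal of $C_1 \times \cdots \times C_n$ in $\RR[P_1,\ldots,P_n]$ is $J = \langle P_1 c_1, \ldots, P_n c_n \rangle$, and the quotient is identified with $\RR[u_1,\ldots,u_n]$ via $P_i \mapsto B_i u_i$. Let $\overline{I_n}$ denote the image of $I_n$ in this quotient. Step 2 gives $\overline{I_n} \subseteq I(\mathrm{MV})$, where $I(\mathrm{MV})$ is the prime ideal of the multi-view variety. For the reverse inclusion I would pull back the generators of $I_n$ from Theorem~\ref{thm:conclines}: the bilinear quadrics $\mathrm{trace}(P_i P_j^*)$ become the fundamental-matrix bilinear forms $u_j^{\top} F_{ij} u_i$ on $\PP^2 \times \PP^2$, while the cubic $3 \times 3$-minors of $(P_i u, P_j u, P_k u)$ become trifocal tensor equations among three views. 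Since $I(\mathrm{MV})$ is generated by such bifocal and trifocal polynomials by \cite{AST, THP}, one obtains $\overline{I_n} \supseteq I(\mathrm{MV})$, hence equality.

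\textbf{Main obstacle.} The crux is Step 3, namely explicitly matching the pulled-back generators of $I_n$ with the bifocal and trifocal generators in \cite{AST, THP}. The bilinear case reduces to the geometric fact that $\mathrm{trace}(P_i P_j^*) = 0$ iff lines $P_i, P_j$ meet, which for lines through $c_i$ and $c_j$ is the classical epipolar constraint. The trilinear case is subtler: Theorem~\ref{thm:conclines} supplies ten cubics per triple $\{i,j,k\}$, but only a smaller set of trifocal forms is needed to generate $I(\mathrm{MV})$ on that triple, so the matching proceeds modulo linear combinations and the relations $P_i c_i = 0$. Once triples are handled, nothing further is needed, as $I(\mathrm{MV})$ is generated by forms involving at most three views.
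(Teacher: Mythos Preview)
Your Steps 1 and 2 are fine and match the paper's treatment of the ``first statement.''  Where you diverge sharply is in Step 3.  The paper does \emph{not} attempt an explicit generator-by-generator matching; instead it invokes the functorial description of the multi-view scheme due to Li \cite{Li}: both $(I_n+J)/J$ and $I(\mathrm{MV})$ define closed subschemes of $(\PP^2)^n$ representing the same functor (tuples of image points admitting a common preimage), hence are scheme-theoretically equal, and the compatibility with the initial degenerations in \cite[\S 3]{Li} transfers the ideal identification.  This bypasses entirely the question of how the ten cubics per triple collapse modulo the linear relations $P_i c_i = 0$.

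Your alternative route---pulling back the generators of $I_n$ and matching them against the bifocal/trifocal generators of \cite{AST}---is more elementary and more informative, and the bilinear case does go through exactly as you say (both sides cut out the same bilinear hypersurface in $\PP^2\times\PP^2$, hence coincide up to scale).  The trilinear case, however, is where the real content lies, and you have not actually closed it: you need that the images of the ten cubic minors of $(P_i u, P_j u, P_k u)$, after substituting $P_\bullet = B_\bullet u_\bullet$, generate the full trilinear component of $I(\mathrm{MV})$ for that triple.  This is true, but it is not a formality---one must either verify it by an explicit computation in coordinates (as in the $n=3$ computations underlying Theorem~\ref{thm:conclines}) or argue via the geometric characterization of the trifocal forms.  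Simply noting that both sides ``are trifocal-type equations'' does not establish the containment $I(\mathrm{MV}) \subseteq \overline{I_n}$.  So your strategy is sound and arguably preferable for its concreteness, but as written the proof is incomplete at precisely the point you flag as the main obstacle.
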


\begin{proof} The first statement is immediate from the discussion of the
two realizations of $\PP^2$, as the image of $A_i$ or as the plane $C_i$ in $\PP^5$.
The second statement about ideals is more subtle. It can be derived using the
functorial set-up developed by Li  \cite{Li}. Both schemes represent the same
functor, so they are isomorphic as in \cite[Proposition 2.8]{Li}. The isomorphism is
compatible with the initial degeneration in \cite[\S 3]{Li}. The identification of
ideals follows.
\end{proof}

The ideal $I_n$ of the concurrent lines variety $V_n$ is
minimally generated by $\binom{n+1}{2}$ quadrics and $10 \binom{n}{3}$ cubics
in the $6n$ Pl\"ucker coordinates. We add to this the $3n$ linear equations
that define $C_1 \times \cdots \times C_n$. This reduces the minimal generators
to $\binom{n}{2}$ quadrics and $\binom{n}{3}$ cubics. These are the
 bilinearities and trilinearities, well-known in the
 computer vision community \cite{HZ, THP},  that link two and three views.
For an algebraic derivation see \cite[Corollary 2.7]{AST}.

\begin{example} \rm
Let $n=4$ and take $c_1,c_2,c_3,c_4$ to be the four coordinate points in $\PP^3$.
Using notation as in Example \ref{ex:sixlines}, we represent
$C_1 \times C_2 \times C_3 \times C_4$ by the prime ideal
$$ J \,\, = \,\, \langle 
\, p_{12}, p_{13}, p_{23},\,
\, q_{02}, q_{03}, q_{23},\,
\, r_{01}, r_{03}, r_{13},\,
\, s_{01}, s_{02}, s_{12}
\, \rangle.
$$
The concurrent lines ideal $I_4$ is generated by $10$ quadrics and $40$ cubics.
Their sum $I_4+J$ is a prime ideal. Modulo $J$, it is generated by
$6$ quadratic binomials and $4$ cubic binomials. As in \cite[Proposition 4.1]{AST}, these
are the relations among the off-diagonal entries of $4 \times 4$-matrices
$$ \begin{pmatrix}
* & p_{01}  & p_{02} &  p_{03}\, \\
- q_{01} & * & q_{12} & q_{13} \,\\
- r_{02} & -r_{12} & * & r_{23} \,\\
-s_{03} & -s_{13} & -s_{23} & *\, 
\end{pmatrix}
$$
that  have rank $1$. For instance 
$q_{12} s_{03}+q_{01} s_{23}$ and
$p_{01} q_{12} r_{02}+p_{02} q_{01} r_{12} $ are in $I_4+J$.
\hfill $\diamondsuit$
\end{example}

We next generalize Proposition \ref{prop:multiview} to arrangements
of $n_1$ pinhole cameras $C_1,\ldots,C_{n_1}$
 and $n_2$ two-slit cameras $C_1',\ldots,C_{n_2}'$.  These
$n = n_1 + n_2$ cameras are assumed to satisfy the hypothesis 
of Theorem~\ref{thm:pairwisedisjoint}. Thus, the
pinholes are distinct, the slits are pairwise disjoint, and no pinhole  is
allowed to lie on a slit. The following is our main result in this section:

\begin{theorem}\label{th:multi-image_linear}
The ideal of the multi-image variety $ M(C_1,\ldots,C_{n_1},C'_1,\ldots,C'_{n_2})$
 is minimally generated by 
$3 n_1+2 n_2$ linear forms, $\binom{n_1+n_2}{2} + n_2$ quadrics, and
$\binom{n_1}{3}+3 \binom{n_1}{2} n_2+ 6 n_1 \binom{n_2}{2} + 10 \binom{n_2}{3}$  cubics
in the $6n_1 + 6n_2$ Pl\"ucker coordinates on the ambient space $(\PP^5)^{n_1+n_2}$.
\end{theorem}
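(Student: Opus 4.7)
The plan is to promote the set-theoretic equality of Theorem~\ref{thm:pairwisedisjoint} to the ideal-theoretic identity $I(M) = I_n + J$, where $n := n_1+n_2$ and $J$ is the ideal of the product of congruences $C_1 \times \cdots \times C_{n_1} \times C'_1 \times \cdots \times C'_{n_2}$, and then to count minimal generators of $I_n + J$ grade by grade. Proposition~\ref{prop:multiview} handles the ideal-theoretic statement in the pure pinhole case via the functorial degeneration of \cite{Li}. I would extend this either by (a) adapting Li's argument, using that a two-slit $(1,1)$-congruence is a transverse complete intersection of the Grassmannian with two hyperplanes and degenerates compatibly, or by (b) verifying primality of $I_n+J$ directly, folding the generators of $J$ into the Gr\"obner basis from Theorem~\ref{thm:conclines} under a compatible term order so that the combined initial ideal remains squarefree, and using the set-theoretic identity to pin down the unique minimal prime of the correct dimension. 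This ideal-theoretic upgrade is the main technical obstacle.

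The description of $J$ is then straightforward. Each pinhole $C_i$ is cut out by three linear forms $P_i c_i = 0$, and the Pl\"ucker quadric $\mathrm{trace}(P_i P_i^*)$ lies automatically in this linear ideal because the $\alpha$-plane sits in the Grassmannian. Each two-slit $C'_j$ is cut out by two linear forms together with the Pl\"ucker quadric $\mathrm{trace}(P'_j (P'_j)^*)$, which is not automatic since two hyperplanes meet the Grassmannian in a surface. Hence $J$ contributes $3n_1+2n_2$ linear forms and $n_2$ quadrics. For quadrics from $I_n$: Theorem~\ref{thm:conclines} supplies $\binom{n+1}{2} = n + \binom{n}{2}$ quadrics, of which the $n_1$ pinhole-diagonal Pl\"ucker quadrics are absorbed into the linear ideal, the $n_2$ two-slit-diagonal ones coincide with quadric generators of $J$, and the $\binom{n}{2}$ bilinear quadrics $\mathrm{trace}(P_i P_j^*)$ survive independently. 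This gives the claimed $\binom{n}{2}+n_2$ minimal quadratic generators.

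For cubics, Theorem~\ref{thm:conclines} contributes ten generators per triple $\{i_1,i_2,i_3\}$, namely the $3\times 3$ minors of $(P_{i_1} u, P_{i_2} u, P_{i_3} u)$ as $u$ ranges over ten basis vectors. A pinhole constraint $P_{i_s} c_{i_s} = 0$ forces $P_{i_s} u$ to depend only on $u$ modulo $c_{i_s}$ and to take values in the hyperplane $c_{i_s}^\perp$, and this rigidification produces linear dependencies among the cubic minors. A direct linear-algebra reduction, which I would first verify in {\tt Macaulay2} for $n \leq 4$, should show that a triple containing $k$ pinholes contributes $r_k$ independent cubic generators with $(r_0,r_1,r_2,r_3)=(10,6,3,1)$. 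Summing by pinhole count across all triples yields the four terms $\binom{n_1}{3} + 3\binom{n_1}{2}n_2 + 6 n_1 \binom{n_2}{2} + 10\binom{n_2}{3}$. For general $(n_1,n_2)$, completeness and independence of this generating set follow by the descent-to-small-cases argument used in Theorem~\ref{thm:conclines}: each S-polynomial or syzygy among the listed generators involves at most four cameras and so reduces to an already-verified small case.

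The serious work lies in two places: extending the ideal-theoretic statement of Proposition~\ref{prop:multiview} to two-slit cameras, and verifying the collapse pattern $(10,6,3,1)$ for mixed triples---ideally via a clean geometric argument that tracks how each $\alpha$-plane constraint collapses the rank of the column matrix $(P_{i_1} u, P_{i_2} u, P_{i_3} u)$ at the distinguished basis choices of $u$. Both steps appear amenable to the techniques already in place, with the first being the more substantial obstacle.
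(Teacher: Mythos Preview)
Your proposal is correct and follows essentially the same route as the paper: form $I_n + J$, verify minimal generators for small $n$ by direct computation (the paper does $n \leq 3$ in {\tt Macaulay2}, one case per ordered partition $(n_1,n_2)$), and then reduce large $n$ to small $n$ using that each $\ZZ^n$-graded piece of $I_n$ involves at most three cameras. Your explicit collapse pattern $(r_0,r_1,r_2,r_3)=(10,6,3,1)$ for the cubics is exactly what the paper's {\tt Macaulay2} checks produce, and your descent argument matches the paper's almost verbatim.

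The one place you are more careful than the paper is the ideal-theoretic upgrade $I(M) = I_n + J$. The paper's proof simply asserts ``the desired ideal is obtained from $I_n$ by adding'' the linear forms and proceeds to count; the primality check is folded into the same {\tt Macaulay2} verification that establishes the generator counts for $n \leq 3$, and the reduction-to-small-cases argument is taken to cover both issues at once. Your route (b)---squarefree initial ideal under a compatible term order plus the set-theoretic equality from Theorem~\ref{thm:pairwisedisjoint}---is a cleaner articulation of why that computation suffices, and your route (a) via Li's framework would give a more conceptual explanation, but neither is strictly needed beyond what the paper already does computationally.
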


Note that for $n_2 = 0$ we recover the known ideal generators of the multi-view variety \cite{AST}.

\begin{proof}
The desired ideal is obtained from $I_n$ by adding $3$ linear forms for every pinhole camera $C_i$
and $2$ linear forms for every two-slit camera $C_i'$. We need to examine the extent
to which  the generators of $I_n$  become linearly dependent modulo these
$3 n_1 + 2n_2$ linear forms. For $n \leq 3$ cameras this examination amounts to 
computations with {\tt Macaulay2}, one for each ordered partition  $(n_1,n_2)$ of $n$.
 For $n \geq 4$  cameras we group
the minimal generators of $I_n$ according to their degree in the $\ZZ^n$-grading.
Each graded component specifies a subset of cameras of size at most three.
Hence all the linear relations arise from those for $n=3$.
\end{proof}

\subsection{Epipolar Geometry for Rational Cameras}
\label{subsec52}

In this subsection we take a closer look at the case of
two rational cameras $C_1$ and $C_2$. We assume
that $C_i$ is a congruence of bidegree $(1,\beta_i)$
for $i=1,2$ and that $\mathcal{F}(C_1) \cap \mathcal{F}(C_2) = \emptyset$.
 The associated multi-image variety $M(C_1, C_2)$ in  $\PP^5 \times \PP^5$
 is defined by the ideal
 \begin{equation}
 \label{eq:ideal52}
 I_P(C_1) \,+\, I_Q(C_2) \,+\, \langle {\rm trace}(PQ^*)\rangle
\quad\subset \,\,\,\CC[p_{01},\ldots,p_{23},q_{01},\ldots,q_{23}],
\end{equation}
 where $I_P(C_1)$ and $I_Q(C_2)$ are respectively the ideals for $C_1$ and $C_2$ in the two sets of variables.

This set-up generalizes familiar objects from two-view geometry.
For example, if $p$ is a line in the congruence $C_1$, then the
{\em epipolar curve} ${\rm Epi}(p)$ in $C_2$ consists of all lines $q$ such that $(p,q)$ belongs to $M(C_1,C_2)$. The ideal of ${\rm Epi}(p)$ in the $\PP^5$ with coordinates $q_{01},\ldots,q_{23}$ is given by $ I_Q(C_2) + \langle {\rm trace}(PQ^*) \rangle$. The curve ${\rm Epi}(p)$ has degree $1+\beta_2$ in Pl\"ucker coordinates (see Proposition \ref{prop:epipolar} below). In particular, for pinhole cameras $C_1, C_2$, we recover 
the classical epipolar lines in two-view geometry \cite{HZ}. However, if either $C_1$ or $C_2$ is not a pinhole cameras, then the families of curves ${\rm Epi}_{12}=\{{\rm Epi}(p) : p \in C_1\}$ and ${\rm Epi}_{21}=\{{\rm Epi}(q) : q \in C_2\}$ are not related by a one-to-one correspondence. More concretely: if $q$ and $q'$ both belong to ${\rm Epi}(p)$, then we cannot conclude that ${\rm Epi}(q)={\rm Epi}(q')$. This follows from the fact that the ideal from Theorem~\ref{th:multi-image_linear} is not multilinear. This contrasts with the classical case, where there exists a homography relating the epipolar lines in each image, which are isomorphic to a $\PP^1$.

In traditional two-view geometry, the two camera centers in $\PP^3$
span the ``baseline'', which projects onto the two epipoles. This generalizes as follows to our setting.
A line $L$ in $\PP^3$ is  a {\em baseline} for the two cameras $C_1$ and $C_2$
if it lies in the intersection $C_1 \cap C_2$ in ${\rm Gr}(1,\PP^3)$. 
The baselines are precisely the loci that are contracted by the map~\eqref{eq:mapCCC}, since for every point $x$ in such a line $L$ we have
$(C_1(x),C_2(x)) = (L,L)$.
We expect $C_1 \cap C_2$ to consist of finitely many points.
Some of these points are defined over $\CC$.
These are included in our count.

\begin{proposition}\label{prop:epipolar}
Let $C_1$ and $C_2$ be general congruences of bidegree $(1,\beta_1)$ and
$(1,\beta_2)$. The epipolar curves ${\rm Epi}(p)$ and ${\rm Epi}(q)$ 
in ${\rm Gr}(1,\PP^3)$ have degrees $1+\beta_2$ and $1  + \beta_1$ respectively.
The number of baselines in $\PP^3$ for the camera pair $(C_1,C_2)$  equals
$ 1 + \beta_1 \beta_2$.
\end{proposition}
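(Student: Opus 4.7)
The plan is to translate both claims into intersection-theoretic computations in the Chow ring of ${\rm Gr}(1,\PP^3)$, and then promote these cohomological identities to honest set-theoretic counts via Bertini/Kleiman-style transversality. Let $\sigma_1,\sigma_2,\sigma_{1,1},\sigma_{2,1},\sigma_{2,2}$ denote the usual Schubert classes on ${\rm Gr}(1,\PP^3)$ (lines meeting a fixed line; lines through a point; lines in a plane; lines in a plane through a point on it; a single line), with the standard products $\sigma_2^{\,2}=\sigma_{1,1}^{\,2}=\sigma_{2,2}$, $\sigma_2\cdot\sigma_{1,1}=0$, and $\sigma_1\cdot\sigma_2=\sigma_1\cdot\sigma_{1,1}=\sigma_{2,1}$. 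The definition of bidegree given in Section~\ref{sec2} directly gives $[C]=\alpha\,\sigma_2+\beta\,\sigma_{1,1}$ for any $(\alpha,\beta)$-congruence $C$, since the order and class are the intersection numbers of $[C]$ with the $\alpha$-plane class $\sigma_2$ and the $\beta$-plane class $\sigma_{1,1}$ respectively.

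For the epipolar curves, I would observe that the ideal (\ref{eq:ideal52}) cuts out ${\rm Epi}(p)$ inside $C_2$ by the single linear equation ${\rm trace}(PQ^*)=0$, which defines the Schubert hyperplane $\Sigma_p\subset\PP^5$ of lines meeting the fixed line $p$. Hence ${\rm Epi}(p)=C_2\cap\Sigma_p$ is a hyperplane section of $C_2$ in its Pl\"ucker embedding. Using $[\Sigma_p]=\sigma_1$ and Pieri's rule twice,
$$[C_2]\cdot\sigma_1^{\,2}\,=\,(\sigma_2+\beta_2\sigma_{1,1})\cdot(\sigma_2+\sigma_{1,1})\,=\,(1+\beta_2)\,\sigma_{2,2},$$
so the Pl\"ucker degree of ${\rm Epi}(p)$ is $1+\beta_2$. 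The assertion for ${\rm Epi}(q)$ is symmetric.

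For the baseline count, I would expand
$$[C_1]\cdot[C_2]\,=\,(\sigma_2+\beta_1\sigma_{1,1})(\sigma_2+\beta_2\sigma_{1,1})\,=\,(1+\beta_1\beta_2)\,\sigma_{2,2},$$
so the expected number of points in the zero-dimensional intersection $C_1\cap C_2\subset{\rm Gr}(1,\PP^3)$ is $1+\beta_1\beta_2$.

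The main obstacle is upgrading these cohomological computations to genuine degrees and cardinalities, i.e., proving transversality and reducedness under the genericity hypothesis. For the epipolar curves, as $p$ ranges over $C_1$ the hyperplanes $\Sigma_p$ form a nondegenerate linear subsystem of the complete linear system on $C_2$, so a Bertini argument shows that for generic $p$ the section $C_2\cap\Sigma_p$ is a reduced curve of the predicted degree. For the baselines, since ${\rm Gr}(1,\PP^3)$ is homogeneous under ${\rm PGL}(4)$, Kleiman's transversality theorem implies that a generic translate of $C_2$ meets $C_1$ in exactly $1+\beta_1\beta_2$ reduced points; invoking the genericity of the pair $(C_1,C_2)$ together with the assumption $\mathcal F(C_1)\cap\mathcal F(C_2)=\emptyset$ rules out excess intersection along the focal loci and ensures this count is attained by $C_1\cap C_2$ itself.
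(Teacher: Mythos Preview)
Your argument is correct and follows essentially the same approach as the paper: both reduce the statement to the two standard Schubert-calculus facts that a degree-$d$ hypersurface meets an $(\alpha,\beta)$-congruence in a curve of degree $d(\alpha+\beta)$ and that two congruences of bidegrees $(\alpha_1,\beta_1)$ and $(\alpha_2,\beta_2)$ meet in $\alpha_1\alpha_2+\beta_1\beta_2$ points. The paper simply cites these formulas (Jessop and Schubert calculus), whereas you actually carry out the computation with Schubert classes and then add a Bertini/Kleiman transversality discussion that the paper leaves implicit under the word ``general.''
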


\begin{proof}
The intersection theory in the Grassmannian ${\rm Gr}(1,\PP^3) \subset \PP^5$ works as follows.
 A hypersurface of degree $d$
intersects an $(\alpha,\beta)$-congruence in a curve of degree $d \alpha + d \beta$.
Two congruences of bidegrees $(\alpha_1,\beta_1)$ and
$(\alpha_2,\beta_2) $ intersect in $\alpha_1 \alpha_2 + \beta_1 \beta_2$ points.
A classical reference is Jessop's book \cite{Jes}. A modern one is any
introduction to {\em Schubert calculus}.
\end{proof}

\noindent We now illustrate the concepts introduced in this subsection
with an example.

\begin{example} \rm

Let $\beta_1 = \beta_2=2$ and consider the type 3 congruences $C_1$ and $C_2$ of common transversals to $L_1, X_1$ and $L_2, X_2$, where
\[
L_1=V(x_1,x_2-x_3), \,\,\,\, X_1=V(x_0, x_1^2+x_2^2-x_3^2 ),
\]
\[
L_2=V(x_0 - x_1 , x_2),  \,\,\,\, X_2=V(x_0^2-x_1^2+x_2^2,x_3).
\] 
Note that that the intersection of $ \mathcal{F}(C_1) = L_1 \cup X_1$
and $ \mathcal{F}(C_2) \,=\, L_2 \cup X_2 $ in $\PP^3$ is empty.
The intersection points on the two focal loci are $L_1 \cap X_1 = \{(0{:}0{:}1{:}1)\} $
and $L_2 \cap X_2 = \{(1{:}1{:}0{:}0)\}$.
The prime ideals of the two congruences in the coordinate ring of ${\rm Gr}(1,\PP^3)$ are given by
\[
\label{eq:C1}
I_P(C_1) \, = \, \langle p_{12}-p_{13}, \,\, p_{01}^2+p_{02}^2
       -p_{03}^2, \,\, p_{01}p_{13}+p_{02}p_{23}+p_{03}p_{23}, \,\, p_{01} p_{23}-p_{02} p_{13}+p_{03} p_{12} \rangle,
\]
\[
\label{eq:C2}
 I_P(C_2) \, = \, \  \langle p_{02}-p_{12} , \,\, p_{03}^2-p_{13}^2+p_{23}^2, \,\, p_{01}p_{03}+p_{01}p_{13}-p_{12}p_{23}, \,\, p_{01} p_{23}-p_{02} p_{13}+p_{03} p_{12} \rangle .
\]
In both expressions, the first two polynomials are the Chow forms of $L_i$ and  $X_i$ respectively.
The ideal of the two-image variety $M(C_1, C_2)$ is given by \eqref{eq:ideal52}.

If we fix a point $p$ in $C_1$ then its corresponding cubic curve  ${\rm Epi}(p)$ lives in $C_2$,
 and vice versa. For example, the ideal $I_P(C_2)+\langle  3p_{01} + 4p_{02} + 5p_{03} - 3p_{12} - 3p_{13} + p_{23} \rangle$ defines the epipolar curve in ${\rm Gr}(1,\PP^3)$ associated with $p=( 3 : 4 : 5 : -3 : -3 : 1)$ in $C_1$.

The ideal $I_P(C_1)+I_P(C_2)$ defines five points in ${\rm Gr}(1,\PP^3)$,
These represent the five baselines.
One point is $(0:1:1:1:1:0)$. It represents the line 
through $(0{:}0{:}1{:}1)$ and $(1{:}1{:}0{:}0)$.
 The other four baselines have the Pl\"ucker vectors
 $$ \qquad \biggl(\,
\frac{5}{2}a^2-\frac{1}{2}\,  : \,a\, : \,-\frac{5}{2}a^3 - \frac{1}{2}a \,:\, a\, :\,a\,  :\, 1
 \biggr)  \qquad \hbox{where} \quad  5 a^4-2a^2+1 = 0. 
 $$
 We see that three of the five baselines are real.
 The other two are defined over $\CC$.
   \hfill $\diamondsuit$
\end{example}

\section{Higher-Order Cameras}
\label{sec6}

In Sections \ref{sec4} and \ref{sec5} we considered congruences whose
point-to-line maps $x \mapsto C(x)$ are rational.  However,
researchers in computer vision have also studied non-standard cameras
that are algebraic of higher order (see e.g.~\cite{SRT}). For example,
a $(2,\beta)$-congruence associates a given point $x$ with a pair of
lines, but the corresponding physical camera might record only one
line for $x$, due to orientation constraints.  Using higher order
cameras also allows the possibility of triangulating the position of
3D points from a single picture (if the camera is known, as in
\cite{HKS}). In this section we develop algebraic geometry for two
types of devices that exist in practice, namely non-central panoramic
cameras and catadioptric cameras.
 
\subsection{Panoramic Cameras}

A panoramic camera enables photographs with a $360^\circ$ field of view.
One such panoramic device consists of a 1D-sensor measuring 2D-projections onto a fixed center, that is rotated about a vertical axis $L$ not containing the center. The 1D-sensor travels
on a circle $X$ around the line $L$.
The associated congruence $C$ consists of all lines that intersect both $L$ and $X$.
This  has bidegree $(2,2)$.
Physical realizations come in two versions. Depending on 
the orientation of the sensor, precisely one of the
two lines of $C$ through a point $x$ is being recorded.
If the sensor points outwards then we get a {\em non-central panoramic camera}.
This is shown on the left in 
Figure~\ref{fig:panorama}. 
If the sensor points inwards then the camera is a {\em cyclograph}, a device that records a $360^\circ$ representation of a single  object placed in the middle.  

Another system is the {\em stereo panoramic camera}, on the right in  Figure~\ref{fig:panorama}.
It is obtained by rotating a 1D-sensor about an axis parallel to the sensor. In each position the sensor records parallel lines tangent to the rotation.
 This is a variation of the camera proposed in~\cite{HKS}. It produces stereo (binocular) panoramic images, since every 3D-point is observed from two sensor locations.
 The paper \cite{SRT} features
both of the cameras shown in Figure~\ref{fig:panorama}.

\begin{figure}[h]
\centering
\includegraphics[height=4cm]{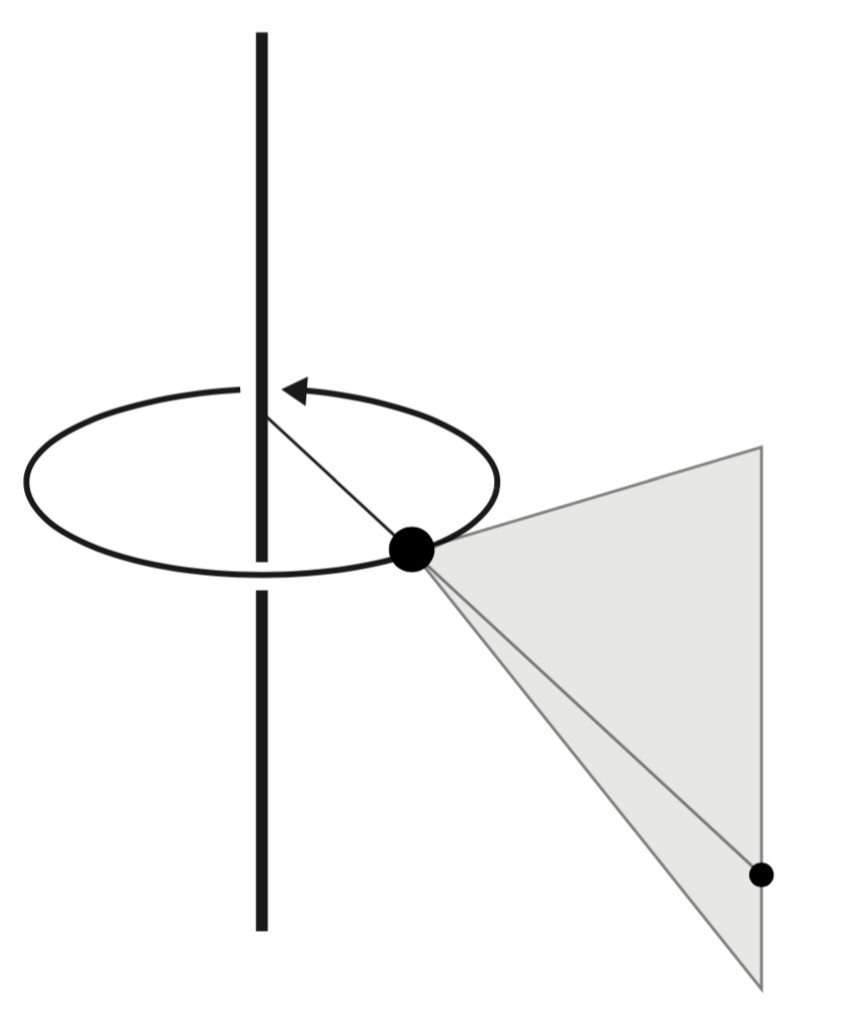}\qquad
\includegraphics[height=4.2cm]{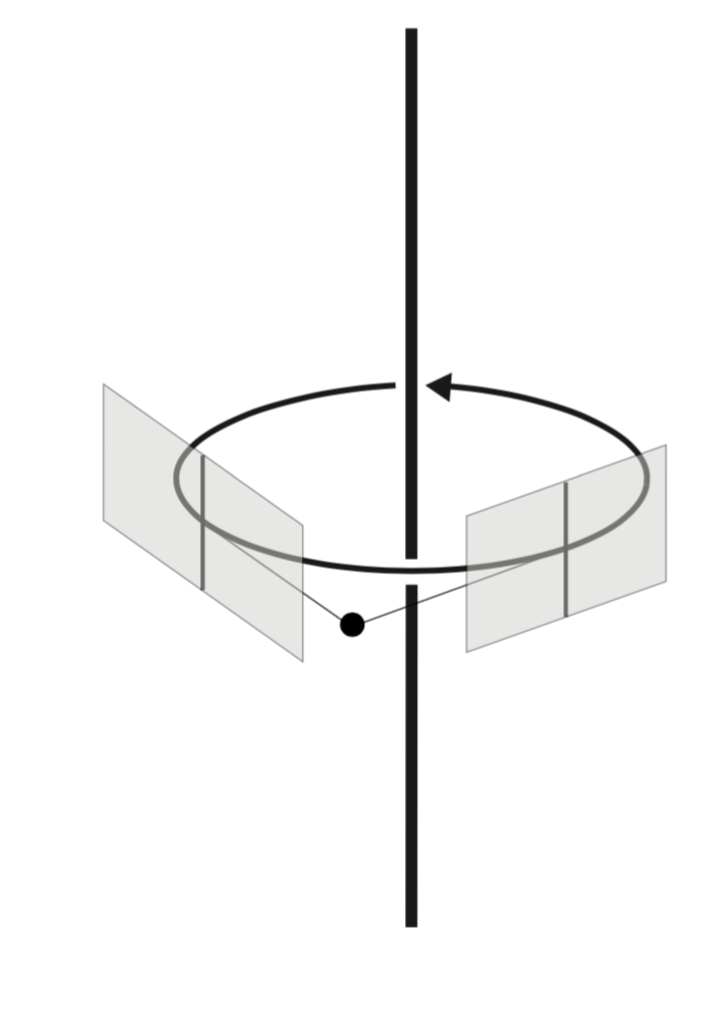}
\caption{Panoramic cameras:  non-central ({\em left}) and stereo ({\em right})}
\label{fig:panorama}
\end{figure}

We first discuss the non-central panoramic camera. The  corresponding
$(2,2)$-congruence $C$ is determined by a line $L$ and a non-degenerate conic $X$, both in $\PP^3$, such that $L \cap X = \emptyset$.
Then, as above, $C$ consists of all lines in $\PP^3$ that intersect both $L$ and $X$.

\begin{lemma} \label{lem:conicline}
Any two such congruences are equivalent up to projective transformations of~$\PP^3$.
\end{lemma}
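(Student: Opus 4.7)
The plan is to prove the equivalence by three successive normalization stages, using the transitivity of $\mathrm{PGL}(4)$ on simpler configurations: first the line $L$, then the plane $\Pi$ of the conic, then the conic $X$ within $\Pi$. At each stage the residual symmetry group acts transitively on what remains to be normalized.

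Stage 1 is immediate: $\mathrm{PGL}(4)$ acts transitively on lines of $\PP^3$, so after a change of coordinates I may assume $L_1 = L_2 = L$. For Stage 2, I first observe that the plane $\Pi_i$ spanned by $X_i$ cannot contain $L$, because otherwise $L$ and $X_i$ would share a $\PP^2$ and meet by Bezout over $\CC$, contradicting $L \cap X_i = \emptyset$. Thus $p_i := L \cap \Pi_i$ is a single point. Setting $L = V(x_2, x_3)$, the stabilizer of $L$ in $\mathrm{PGL}(4)$ consists of block upper-triangular matrices with $2 \times 2$ diagonal blocks $A, D$ and off-diagonal block $B$. The $A$-block acts as $\mathrm{PGL}(2)$ on $L$ and lets me normalize $p_i$ to $(0{:}1{:}0{:}0)$; then $\Pi_i = V(x_0 + c_i x_2 + d_i x_3)$ for some constants, and the $B$-block supplies the translation $x_0 \mapsto x_0 + c_i x_2 + d_i x_3$ carrying $\Pi_i$ to the standard plane $V(x_0)$. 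Hence I may further assume $\Pi_1 = \Pi_2 = \Pi = V(x_0)$ and $p_1 = p_2 = p$. For Stage 3, the stabilizer of $(L, \Pi)$ in $\mathrm{PGL}(4)$ surjects onto the stabilizer $\mathrm{Stab}_p$ of $p$ in $\mathrm{Aut}(\Pi) = \mathrm{PGL}(3)$, by lifting any element to a $\mathrm{PGL}(4)$-element with leading $1$ in the $x_0$ block. So it suffices to show that $\mathrm{Stab}_p$ acts transitively on smooth conics in $\Pi$ avoiding $p$. Writing $p = (1{:}0{:}0)$ in coordinates $(y_0, y_1, y_2)$ on $\Pi$, any such conic is cut out by $Q = y_0^2 + 2 y_0 \ell(y_1, y_2) + R(y_1, y_2)$ with $R$ a non-degenerate binary quadratic form. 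Completing the square via $y_0 \mapsto y_0 - \ell$ (an element of $\mathrm{Stab}_p$) kills the cross term; the $\mathrm{GL}(2)$-action on $(y_1, y_2)$ then diagonalizes $R$ over $\CC$ to, say, $y_1 y_2$, giving the canonical model $V(y_0^2 + y_1 y_2)$.

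The step most deserving of care is the transitivity in Stage 2, but the coordinate computation sketched above settles it explicitly; the working over $\CC$ is essential in Stage 3, since over $\RR$ the signature of $R$ would yield distinct real orbits. Composing the transformations from the three stages produces a single element of $\mathrm{PGL}(4)$ mapping $(L_1, X_1)$ to $(L_2, X_2)$, which proves the lemma.
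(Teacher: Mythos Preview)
Your proof is correct but follows a different route from the paper's. The paper normalizes the \emph{conic} first, bringing both $X$ and $X'$ to the absolute conic $V(x_0,\,x_1^2+x_2^2+x_3^2)$; its stabilizer is then the complex similarity group (projectivizations of $\tilde x \mapsto A\tilde x + b$ with $AA^T=\lambda\,\mathrm{Id}$), which is observed to act transitively on affine points and on points of $V(x_0)\setminus X$, hence on lines disjoint from $X$. You instead normalize the \emph{line} first, then the plane of the conic, then the conic inside that plane, computing block upper-triangular stabilizers explicitly at each stage.

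The paper's argument is shorter because the similarity group has a ready-made geometric description and its transitivity is essentially obvious. Your approach is more hands-on but also more self-contained: it avoids invoking the orthogonal similarity group and reduces everything to block-triangular linear algebra plus completing the square. Your staging also makes the $\CC$-versus-$\RR$ issue land exactly where it should---at the diagonalization of the residual binary form $R$---whereas the paper handles the real case by a parallel remark at the end. One small point: after completing the square the binary form is $R-\ell^2$ rather than $R$, but its non-degeneracy follows from that of the full conic, so the argument is unaffected.
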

\begin{proof} Given any two pairs of disjoint conics and lines $(X, L)$ and $(X', L')$ in $\PP^3$, we may always apply a homography over $\CC$ so that $X=X'=V(x_0,x_1^2+x_2^2+x_3^2)$. Transformations that fix $X$ are projectivizations of affine maps $\CC^3 \rightarrow \CC^3,\, \tilde x \mapsto A\tilde x + b$ such that $AA^T = \lambda {\rm Id}$, where $\tilde x = (x_1/x_0,x_2/x_0,x_3/x_0)$ are coordinates on the affine chart $U=\{x_0 \ne 0 \}$. These maps act transitively on points of $U_0$ and on points of $V(x_0) \backslash X$,
 so we conclude that $L$ and $L'$ are equivalent. If we restrict to $\RR$, 
 and both conics have real points, then we use $X=X'=V(x_0,x_1^2+x_2^2-x_3^2)$, and a similar result holds.
\end{proof}

Thanks to Lemma~\ref{lem:conicline}, we may choose 
$\,I_L \,= \,\langle x_1, x_2 \rangle \,$ and $\,
I_X \,=\, \langle x_3,x_1^2+x_2^2-x_0^2 \rangle \,$
as  the ideals that represent the line $L$ and the conic $X$.
The ideal of the congruence $C$ in $\RR[p_{01},\ldots,p_{23}]$
is generated by the Chow forms of $L$ and $X$ together with  the Pl\"ucker quadric:
\begin{equation}
\label{eq:specialkummer}
I_C \, = \,\langle \,p_{12}\, ,  \,\, p_{03}^2-p_{13}^2-p_{23}^2 \, , \,\,
p_{01} p_{23} - p_{02} p_{13} + p_{03} p_{12}  \rangle.
\end{equation}
We can see that $C$ has bidegree $ (2,2) $ by intersecting it with
generic $\alpha$-planes and $\beta$-planes. Indeed,
for generic vectors $u,v \in \RR^4$, we verify
$\, {\rm degree}(I_C + \langle P u \rangle) = 
{\rm degree}(I_C + \langle P^* v \rangle) = 2  $.

\begin{proposition} \label{prop:non-central}
The focal locus of the non-central panoramic camera 
 consists of the plane spanned by the conic $X$, taken with multiplicity $2$,
and a conjugate pair of complex planes that intersect in the line $L$.
Algebraically, it is defined by the non-reduced quartic $(x_1^2+x_2^2) x_3^2$.
\end{proposition}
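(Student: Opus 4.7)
The plan is to combine a direct geometric parametrization of the lines of $C$ through a variable point $x$ with the algorithm at the end of Section~\ref{sec2} to pin down both the support and the scheme structure of $\mathcal F(C)$.

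First I would fix $x = (x_0:x_1:x_2:x_3) \in \PP^3 \setminus L$ and describe the lines of $C$ through $x$. Any such line lies in the plane $\pi_x := x \vee L$ and meets $X$; since $X \subset V(x_3)$, the intersection point must lie on $\ell_x := \pi_x \wedge V(x_3)$. The line $\ell_x$ passes through $L \wedge V(x_3) = (1{:}0{:}0{:}0)$ and through $(0{:}x_1{:}x_2{:}0)$, so it admits the parametrization $(1 : t x_1 : t x_2 : 0)$. Substituting into $x_1^2+x_2^2-x_0^2 = 0$ yields the quadratic $(x_1^2 + x_2^2)\, t^2 - 1 = 0$, with discriminant $x_1^2+x_2^2$. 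Its two roots collide exactly when $x_1^2+x_2^2 = 0$, giving the conjugate pair $V(x_1 \pm i x_2)$ whose real intersection is $L$; on the other hand, the two distinct roots produce the same line $x \vee P(t)$ through $x$ exactly when $x \in \ell_x$, i.e.\ when $x \in V(x_3)$. Thus set-theoretically $\mathcal F(C) = V(x_3) \cup V(x_1^2+x_2^2)$, and the analysis already identifies $V(x_3)$ as the plane spanned by $X$ and the complex pair as the planes meeting in $L$.

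To pin down multiplicities I would run the algorithm from the end of Section~\ref{sec2}: form $I_C + \langle Px \rangle$ in $\CC[p_{01},\ldots,p_{23},x_0,\ldots,x_3]$, adjoin the $5 \times 5$ minors of the Jacobian of its generators with respect to the Pl\"ucker coordinates, saturate away the irrelevant ideal (and the extraneous fundamental components coming from $L$ and $X$), and eliminate $p_{01}, \ldots, p_{23}$. A short \texttt{Macaulay2} computation returns the principal ideal $\langle (x_1^2+x_2^2)\, x_3^2 \rangle$, which is the claimed non-reduced quartic.

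The most delicate point is explaining why $V(x_3)$ is doubled while the complex pair is not: the discriminant of the quadratic above only accounts for the complex planes and contributes each of them with multiplicity one. The extra factor of $x_3^2$ arises from the degeneration of the Pl\"ucker map $t \mapsto x \vee (1:tx_1:tx_2:0)$ on $V(x_3)$, where this map collapses from two-to-one onto the single line $\ell_x$, so that both branches of the parameter collide to the same Pl\"ucker point and the Jacobian criterion in the algorithm detects a second-order vanishing in the $x_3$ direction. I would verify this by computing the Jacobian ideal locally near a general point of $V(x_3) \setminus (L \cup X)$ and checking that $x_3$ enters squared, then cross-check against the \texttt{Macaulay2} output and against the classification of $\mathcal F(C)$ for bitangent congruences cited after Theorem preceding Example~\ref{ex:twotwo}.
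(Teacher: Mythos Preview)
Your proposal is correct and its decisive step---running the Jacobian/elimination algorithm from the end of Section~\ref{sec2} in \texttt{Macaulay2} to obtain the non-reduced quartic $(x_1^2+x_2^2)x_3^2$---is exactly the paper's proof, which consists of nothing more than that computation together with the remark that for a generic $(2,2)$-congruence the same procedure returns a Kummer quartic.

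What you add beyond the paper is the geometric preamble identifying the support of $\mathcal F(C)$ by parametrizing $\ell_x \cap X$ and separating the two degeneration mechanisms (coincident intersection points versus coincident joins). This is correct and illuminating; the paper simply reports the computer output without it. Two small points worth tightening: your affine parametrization $(1:tx_1:tx_2:0)$ misses the point at infinity on $\ell_x$, so the ``discriminant vanishing'' at $x_1^2+x_2^2=0$ is really the leading coefficient vanishing, and you should pass to the projective parameter $(s:t)$ to see the genuine double root at $(0{:}1)$. And your heuristic for why $x_3$ enters squared is suggestive but not a proof; since you already plan to confirm it by the local Jacobian computation and the global \texttt{Macaulay2} run, that is fine, but the heuristic alone would not carry the multiplicity claim.
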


\begin{proof}
We compute the focal locus as described at the end of Section \ref{sec2}.
For a generic $(2,2)$-congruence, this yields a quartic polynomial
defining a Kummer surface as in~\cite{Kum}.
For the special congruence $C$ given in (\ref{eq:specialkummer}),
the quartic  generator is found to be
$(x_1^2+x_2^2) x_3^2$.
\end{proof}

\begin{remark}  \label{rem:panoramic} \rm
A smooth $(2,2)$-congruence is a del Pezzo surface 
of degree $4$. Its $16$ straight lines 
correspond to a matching between the
$16$ singular points and the $16$ special planes
of its Kummer surface (cf.~Example  \ref{ex:twotwo} and \cite{Kum}).
The congruence $C$ in (\ref{eq:specialkummer}) is singular.
Its singular locus, $V(p_{03},p_{12},p_{13},p_{23})$, consists of
all lines in $\PP^3$ that meet $L$ and lie in the plane spanned by $X$.
The Kummer surface degenerates to the arrangement  of four planes $\mathcal{F}(C)$
in Proposition \ref{prop:non-central}.
It would be desirable to get a better understanding 
of such degenerations~of $(2,2)$-congruences.
One approach to this is sketched by Kummer in
\cite[\S XXXVII, page 71]{Kum}.
\end{remark}

We now discuss the stereo panoramic camera. Its congruence
consists of the lines that are tangent to a singular quadratic surface $Q$
and pass through a fixed line $L$. The pair $(Q,L)$ is unique up to projective
transformations of $\PP^3$. For the camera on the right in
Figure \ref{fig:panorama}, the quadric $Q$ is a cylinder around the axis 
and $L$  is a line at infinity. We note that the stereo panoramic camera
is dual, in the sense of projective geometry, to the non-central panoramic camera. 
Specifically, its congruence $C^*$ is obtained by dualizing $C$ in (\ref{eq:specialkummer}). The result is
\begin{equation}
\label{eq:specialkummer2}
I_{C^*} \, = \,\langle \,p_{03}\, ,  \,\, p_{12}^2-p_{02}^2-p_{01}^2 \, , \,\,
p_{01} p_{23} - p_{02} p_{13} + p_{03} p_{12}  \rangle.
\end{equation}
Here $I_L = \langle x_0, x_3 \rangle$ and $I_Q = \langle x_0^2-x_1^2-x_2^2 \rangle$.
Clearly, $C^*$ has bidegree $(2,2)$.
As in Remark~\ref{rem:panoramic}, $C^*$ is singular along a line.
Singular points are  lines that meet $L$ and the cone point of $Q$.

\begin{corollary} \label{cor:stereo}
The focal locus $\mathcal{F}(C^*)$ of the stereo panoramic camera 
consists of the singular quadric $Q$ and  the plane at infinity (spanned by $L$ and the cone point of $Q$),
which is taken with multiplicity $2$.
Algebraically, it is defined by the non-reduced quartic $x_0^2 (x_0^2 - x_1^2 - x_2^2)$.
\end{corollary}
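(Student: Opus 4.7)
The plan is to mirror the proof of Proposition 6.2 and apply the focal-locus algorithm from the end of Section 2 directly to the ideal $I_{C^*}$ in (6.3). In principle, one could instead invoke the duality principle of Section 2.2, according to which $\mathcal{F}(C^*)$ is the projectively dual variety of $\mathcal{F}(C)$, and feed in Proposition 6.2. However, both focal loci here are non-reduced and supported on arrangements of planes and a quadric, so the scheme-theoretic formulation of that duality is delicate in this degenerate regime, and a direct computation is cleaner and more self-contained.

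Concretely, I would form the ideal $I_{C^*} + \langle Px \rangle$ in $\CC[p_{01},\ldots,p_{23},x_0,\ldots,x_3]$, with $P$ the skew-symmetric matrix of (2.1) treated as a matrix of unknowns and $x$ a vector of parameters. To this I would adjoin the $5 \times 5$ minors of the Jacobian of the generators with respect to the six Pl\"ucker coordinates, saturate with respect to the irrelevant ideal in the Pl\"ucker variables, and finally eliminate the Pl\"ucker variables. The resulting elimination ideal in $\CC[x_0,x_1,x_2,x_3]$ should be principal, generated by the quartic $x_0^2(x_0^2-x_1^2-x_2^2)$. This computation is small enough to verify in \texttt{Macaulay2}.

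It then remains to read off the geometric decomposition from the factorization. The factor $x_0^2-x_1^2-x_2^2$ cuts out exactly the singular quadric $Q$, a cone with apex $(0:0:0:1)$, while the squared factor $x_0^2$ defines the plane at infinity $V(x_0)$ with multiplicity two. Since this plane contains both $L = V(x_0,x_3)$ and the cone point $(0:0:0:1)$ of $Q$, it matches the geometric description in the statement. The main subtlety I expect is tracking multiplicities through the saturation step: prior to saturation, the ideal carries extraneous components coming from the singular locus of $C^*$ identified in Remark 6.3 (lines meeting $L$ and passing through the cone point of $Q$), and one must verify that after saturation the plane $V(x_0)$ appears with multiplicity exactly two, rather than one or three.
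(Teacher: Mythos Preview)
Your proposal is correct and follows exactly the paper's own approach: the paper's proof consists of the single sentence ``This is verified by a computation, like Proposition \ref{prop:non-central},'' and you have spelled out precisely that computation using the focal-locus algorithm from Section~\ref{sec2}. Your additional remarks on tracking multiplicities through saturation and on the duality alternative are sensible elaborations but go beyond what the paper records.
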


\begin{proof}
This is verified by a  computation, like Proposition \ref{prop:non-central}.
\end{proof}

\subsection{Catadioptric Cameras}

A {\em catadioptric camera} is an optical system that makes use of reflective surfaces (catoptrics) and lenses (dioptrics). Mirrors can bring several advantages, such as a larger field of view or better focusing properties. For many applications it is desirable to have a single effective viewpoint \cite{BN}.
This is usually achieved by using a mirror that is a paraboloid or hyperboloid of revolution, placing a pinhole camera at one of the foci.
In our setting, it is natural to consider a catadioptric system
that uses an arbitrary smooth algebraic surface $S$ as a mirror, and a geometric camera $C$ to record lines. 
We shall describe the resulting line congruence.

We measure angles in $\PP^3$ according to the usual scalar product in $U_0=\{x_0 \ne 0\}$, so that
\begin{equation}
\cos \,\angle(x,y) \,\,\,= \,\,\,\frac{x_1 y_1 + x_2 y_2 + x_3 y_3}{\sqrt{(x_1^2+x_2^2+x_3^2)(y_1^2+y_2^2+y_3^2)}}.
\end{equation}
The points $(0 : x_1 : x_2 : x_3)$ and $(0 : y_1 : y_2 : y_3)$
lie on  the plane at infinity, $\PP^3  \setminus U_0$, and they
represent directions in $U_0$. Let $H$ be the plane in $\PP^3$
defined by $a_0 x_0 + a_1 x_1 + a_2 x_2 + a_3 x_3=0$.
Here $a_i \in \CC$ is allowed, but
we assume that $H$ is {\em non-isotropic}, meaning that $a_1^2 + a_2^2 + a_3^2 \not= 0$.
With the convention above, the {\em reflection of a point} $(y_0 : y_1 : y_2 : y_3)$ with 
respect to $H$ is 
\begin{equation}\label{eq:point_reflection}
\rho_H(y)=\begin{bmatrix}
\left (\sum_{i=1}^3 a_i^2\right ) y_0  \smallskip \\
  \left (\sum_{i=1}^3 a_i^2 \right) y_1 - 2\left (\sum_{i=0}^3 a_i y_i\right ) a_1 \smallskip \\
  \left (\sum_{i=1}^3 a_i^2 \right) y_2 - 2\left (\sum_{i=0}^3 a_i y_i\right ) a_2 \smallskip  \\
 \left (\sum_{i=1}^3 a_i^2\right) y_3 - 2\left (\sum_{i=0}^3 a_i y_i\right ) a_3
 \end{bmatrix}.
\end{equation}
This map is a linear involution of $\PP^3$ that fixes $H$.
Its restriction to the real affine $3$-space $U_0$ is the usual Euclidean reflection with respect to $H \cap U_0$.
The reflection of a line $p=x\vee y$ with respect to $H$ is defined  as $\rho_H(x) \vee \rho_H(y)$. This map is an involution of the Grassmannian ${\rm Gr}(1,\PP^3)$.  It acts on Pl\"ucker coordinates
by a linear involution  on the ambient $\PP^5$.
The  $6 \times 6$-matrix defining that involution is the
second compound matrix $\wedge_2 \rho_H$ of the $4 \times 4$-matrix~$\rho_H$.

Let $S$ be a smooth algebraic surface in $\PP^3$ defined by a polynomial $f$ of degree $d$.
Two lines $L$ and $L'$ in $\PP^3$ are 
{\em specular} for $S$ if there exists a point $x \in S$ such that the tangent plane $T_x S$ is not isotropic, 
$L$ and $L'$ meet in $x$, and they are reflections of each other respect to $T_x S$. 
We define the  {\em mirror variety} $M_S$ to be the closure of the set of all pairs $(L,L') \in {\rm Gr}(1,\PP^3)^2$
that are specular for $S$. For a general line $L$ there are $d$ lines $L'$
such that $(L,L') \in M_S$, one for each point $x $ in $S \cap L$. Hence
the mirror variety $M_S$ of a surface $S$ is $4$-dimensional.

To compute the defining equations of the mirror variety $M_S$, we first construct the ideal 
\[J\,\,= \big\langle \,f,\, P x,  {\rm trace}(PP^*), {\rm trace}(QQ^*) \big\rangle \,+\,
\big\langle \wedge_2  \bigl( q \,|\, \rho_{T_x S} (p) \bigr) \big\rangle.
\] 
This lives in $\RR[x_0,x_1,x_2,x_2,p_{01},\ldots,p_{23},q_{01},\ldots, q_{23}]$.
The last summand is the ideal of $2 \times 2$-minors of a $6 \times 2$-matrix,
where the second column is the reflection of the line $p$ with respect to the tangent plane $T_x S$. 
It expresses the requirement that $q$ is equal to that reflection.
We then saturate $J$ with respect to the isotropic ideal $\,I_{\rm Iso}=\langle\, (\nabla_x f)_1^2 + (\nabla_x f)_2^2
 + (\nabla_x f)_3^2 \,\rangle$ and with respect to the irrelevant ideal $\langle x_0,x_1,x_2,x_3 \rangle$, 
 before eliminating the variables $x_0, x_1,x_2, x_3$.

\begin{example} \label{ex:ellipsoid}
 \rm Let $S$ be the ellipsoid given by 
$f = \frac 1 {16} (x_1^2+ x_2^2)+\frac 1 {25} x_3^2- x_0^2$. The mirror~variety $M_{S}$ has codimension $6$ in $\PP^5 \times \PP^5$, and bidegree $4 t_0^5 t_1+12 t_0^4 t_1^2+18 t_0^3 t_1^3+12 t_0^2 t _1^4+4 t_0 t_1^5$. 
\hfill $\diamondsuit$
\end{example}

\begin{remark} \rm The intersection of the mirror variety $M_S$ with the diagonal $\Delta$ in $\PP^5 \times \PP^5$ is the {\em normal congruence}. These are the
 lines that intersect $S$ orthogonally (we assume that we have removed components associated with isotropic tangent planes). The focal locus of the normal congruence is the {\em caustic surface} \cite{CT}. In the 
 language of differential geometry, this is the union of the centers of principal curvature for $S$. 
 The order $\alpha$ of the normal congruence coincides with the
   {\em Euclidean distance degree} (ED degree) of the surface $S$.
That is the number of critical points on $S$ of the squared distance function to a generic point \cite{DH}.
\end{remark}

Let $C$ be any congruence, representing a geometric camera.  The {\em
  specular congruence} of $C$ with respect to $S$ is another surface
$C_S$ in the Grassmannian ${\rm Gr}(1,\PP^3)$. We define $C_S$ as the
closure of the set of all lines $L' $ for which there exist $L \in C$
and $x \in L \cap S$ such that $T_x S$ is not isotropic and $L' =
\rho_{T_x S} L$.  Concretely, $C_S$ contains the lines of $C$ after
these are reflected by $S$.  Thus, $C_S$ is the congruence associated
with the catadioptric camera determined by $S$ and $C$.  Note that if
$L'$ is in $C$ then there exists $L$ such that $(L,L') \in M_S$. This
implies
\begin{equation}\label{eq:specular_section}
C_S \, \subseteq  \, \pi_2 \bigl( M_S \, \cap \,(C \times {\rm Gr}(1,\PP^3))\bigr) \,=: \, C_S',
\end{equation}
where $\pi_2$ is the projection onto the second factor.
For a general $C$, an appropriate application of Bertini's Theorem ensures that
the right hand side $C_S'$ is irreducible, and the containment \eqref{eq:specular_section} is an equality
(set-theoretically). In this case, we can compute equations for $C_S$ by adding the equations defining
the given congruence $C$ (in the variables $p_{01},\ldots,p_{23}$) to the ideal of mirror variety $M_S$, 
then saturating by the irrelevant ideal $\langle p_{01},\ldots, p_{23} \rangle$, and finally 
eliminating the variables $p_{01},\ldots, p_{23}$. We experimented with this in {\tt Macaulay2}.

The next example shows that  $C_S'$ can have spurious components. These are removed by saturating the ideal of $C'_S$ with respect to the Chow form $Ch_{X_{\rm Iso}}$ where 
$X_{\rm Iso}=V(I_{\rm Iso} + \langle f \rangle)$. 
We note that the order and class of the specular congruence $C_S$ depend on the relative position of $S$ and $C$ (and the absolute quadric $V(x_0, x_1^2+x_2^2+x_3^2)$). The focal locus of $C_S$ is a 
{\em caustic by reflection} in~\cite{JP}, but here we do not require for the light source to be a point.

\begin{example} \rm Let $S$ be the ellipsoid from Example \ref{ex:ellipsoid}.
 We first consider a catadioptric camera with mirror $S$ and a pinhole sensor at  
 a point $P$. Let us start with $P=(1:0:0:3)$. 
 The radical ideal of $C_S'$ (computed as described above) is the intersection of two prime ideals:
\[
\begin{aligned}
&I_1=\langle q_{12},3 q_{02}-q_{23},3 q_{01}-q_{13}\rangle \\
&I_2=\langle q_{03} q_{12}-q_{02} q_{13}+q_{01}q_{23},625 q_{01}^2+625 q_{02}^2+256 q_{03}^2+150 q_{01} q_{13}+9 q_{13}^2+150 q_{02} q_{23}+9 q_{23}^2 \rangle.
\end{aligned}
\]
We observe that $I_2$ is a component of $Ch_{X_{\rm Iso}}$. It is extraneous for us.
  More precisely, $X_{\rm Iso}$ contains two quadric curves on $S$, and $I_2$ is generated by the Pl\"ucker quadric and the Chow form of one of these curves. On the other hand, $I_1$  is the ideal of $C_S$.
  This  $(1,0)$-congruence is the $\alpha$-plane for $Q=(1:0:0:-3)$. 
   The points $P$ and $Q$ are the two foci of the ellipsoid~$S$.

   If we choose $P$ randomly, then $C_S=C_S'$.  Using the computation
   explained above, we find that the bidegree of the specular
   congruence $C_S$ is $(8,4)$. According to Josse and
   P\`ene~\cite{JP}, the focal locus $\mathcal{F}(C_S)$, which is the
   caustic by reflection of $S$, is a surface of degree $18$. 

Finally, we consider the catadioptric camera given by $S$ together with a general $(1,1)$-congruence 
(two-slit camera). The resulting specular congruence has bidegree $(12,6)$.
\hfill $\diamondsuit$
\end{example}

In closing, we wish to reiterate that the notion
of order used in this paper is always the algebraic one.
   The ``physical'' order of a catadioptric camera may be quite a
   bit lower,   due to orientation constraints,
   with some of the rays reflected inside the body of the mirror.

\section{Photographic Cameras}
\label{sec7}

The geometric cameras studied in the previous sections are maps
from $\PP^3$ to ${\rm Gr}(1,\PP^3)$. They do not require fixing image coordinates. A physical ``photographic'' camera, on the other hand, will always return measurements using image coordinates. Such a camera
is best modeled as a map $\PP^3 \dashrightarrow \PP^2$ or  $\PP^3 \dashrightarrow \PP^1 \times \PP^1$.
 In this final section, we examine general photographic cameras and their relationship with congruences
 and concurrent lines.

\subsection{Projections and Coordinates}
\label{subsec71}
We define a {\em photographic camera} to be a rational map $\PP^3 \dashrightarrow \PP^2$ or $\PP^3 \dashrightarrow \PP^1 \times \PP^1$ with the property that the fiber of a generic point is a line in $\PP^3$. This extends the traditional notion of a pinhole camera, which is a linear projection $\PP^3 \dashrightarrow \PP^2$ described by a $3 \times 4$ matrix. A photographic camera can be described explicitly  by a triplet 
$[f_0 : f_1 : f_2]$ of homogeneous polynomials in $\RR[x_0,x_1,x_2,x_3]$ of the same degree,
  or by two such pairs $([g_0 : g_1],[h_0:h_1])$. Of course, these polynomials cannot be general. 
  Algebraically, if $[f_0 : f_1 : f_2]$ is a photographic camera then the saturation of $\langle f_i u_j - f_j u_i \,:
  \, j \ne i\rangle$ with respect to $\langle f_0,f_1,f_2 \rangle$ in $\RR[x_0,x_1,x_2, x_3, u_0,u_1,u_2]$ 
  has two generators that are linear in the variables $x_0,x_1,x_2,x_3$.

\begin{example} \label{ex:fff}  \rm
For a photographic camera given by three
quadrics $f_0,f_1,f_2$, the base locus of the map given by $[f_0:f_1:f_2]$ must be a
curve of degree $3$. This is necessary and sufficient for the
requirement that the generic fiber is a line in $\PP^3$.
If the base locus is irreducible then this it is a twisted cubic curve in $\PP^3$.
Algebraically, this means that the three quadrics are the
$2 \times 2$-minors of a $2 \times 3$-matrix of linear forms in $x_0,x_1,x_2,x_3$.
\hfill $\diamondsuit$
\end{example}

 A photographic camera $M$ determines an injective rational map 
 $L_M: \PP^2 \dashrightarrow {\rm Gr}(1,\PP^3)$,
  or $L_M: \PP^1 \times \PP^1 \dashrightarrow {\rm Gr}(1,\PP^3)$, that associates 
  image points with their fiber. The closure of the image of $L_M$ is a congruence $C_M$ of order one.
  This is the congruence of all lines that are ``captured'' by the camera.
The rational camera (\ref{eq:cong_proj}) associated with $C_M$
satisfies  $C_M(x) = L_M(M(x))$ for generic points $x \in \PP^3$.
The base locus of $M$ contains the focal
locus of $C_M$. The photographic camera $M$ has  {\em class}
 $\beta$ if the congruence $C_M$ has bidegree $(1,\beta)$. In Example~\ref{ex:fff}, the class is $\beta = 3$, and $\mathcal{F}(C_M)$ is the
twisted cubic curve $V(f_0,f_1,f_2) \subset \PP^3$.

 Conversely, given any order one congruence $C$ and any birational map $G_C: C \dashrightarrow \PP^2$ or $G_C: C \dashrightarrow \PP^1 \times \PP^1$, we have that $x \mapsto G_C(C(x))$ is a photographic camera. In particular, we can use the classification of congruences in Section \ref{sec4} to construct photographic cameras.

\paragraph{Two-slit cameras.} A {\em linear two-slit camera} is a photographic camera $\PP^3 \dashrightarrow \PP^1 \times \PP^1$ of the form $x \mapsto (A x, B x)$, where $A$ and $B$ are $2 \times 4$-matrices
whose kernels are two skew lines in $\PP^3$.
 It is associated with the $(1,1)$-congruence $C$ of transversals to the two lines.  The
 formula for the rational map
  $\PP^1 \times \PP^1 \dashrightarrow C \subset {\rm Gr}(1,\PP^3)$ taking image points to their fibers~is 
   \begin{equation}\label{eq:parameterization_two_slit}
\begin{small}
\begin{bmatrix} u \\ v \end{bmatrix}\, \,\mapsto \,\,
\begin{bmatrix} A \\ B \end{bmatrix}^{-1} \begin{bmatrix} u \\ 0 \end{bmatrix}\, \vee\,
\begin{bmatrix} A \\ B \end{bmatrix}^{-1} \begin{bmatrix} 0 \\ v \end{bmatrix} 
\end{small}
\,\, = \,\,
u_0 v_0 D_{02} + u_0 v_1 D_{03} + u_1 v_0 D_{12} + u_1 v_1 D_{13} , 
\end{equation}
where $D_{ij}$ are column vectors of the $6 \times 6$-matrix 
$D = \wedge_2 \begin{small} \begin{bmatrix} A \\ B \end{bmatrix}^{-1} \end{small}$. Note that up to a common scale factor, $D_{i(k+2)}=(-1)^{i+k}(A_{\hat i} \wedge B_{\hat k})$, where $A_{\hat i}$ and $B_{\hat k}$ are rows of $A$ and $B$ and $(i,\hat i)$, $(k, \hat k)$ are pairs of distinct indices in $\{0,1\}$.
To obtain two-slit photographic cameras $\PP^3 \dashrightarrow \PP^2$, we can compose the linear two-slit camera with any birational map $\PP^1 \times \PP^1 \dashrightarrow \PP^2$.

\begin{example} \rm One photographic two-slit camera 
$\PP^3 \dashrightarrow \PP^2$ is $M(x) = (x_0x_3 : x_1 x_2 : x_1 x_3)$. This corresponds to the $(1,1)$-congruence of lines intersecting $L_1= V(x_2,x_3)$ and 
$L_2 =V(x_0,x_1)$. The map $M'(x)=(x_1 x_2 : x_0 x_3: x_0 x_2)$ is a different photographic camera 
that gives the same geometric camera. The two photographic cameras are related by $M'=M \circ \sigma $, where $\sigma$ is the Cremona transformation
$ \,\PP^2 \dashrightarrow \PP^2,\,w \mapsto (w_1 w_2 : w_0 w_2 : w_0 w_1)$. \hfill $\diamondsuit$
\end{example}

\paragraph{Cameras of higher class.} 
 Let $f, g, h$ be general binary forms of degree $\beta-1, \beta, \beta$ respectively,
 and let $A$ and $B$ be $2\times 4$-matrices as above (and $B$ has rows $B_1,B_2$). The~map  
\begin{equation}\label{eq:alg_camera_model}
\PP^3 \dashrightarrow \PP^1 \times \PP^1,\,
x \mapsto \left(Ax, \,\,\, \begin{pmatrix} 
g(Ax)-f(Ax) B_1 x \\ h(Ax)-f(Ax) B_2 x 
\end{pmatrix} \right)
\end{equation}
is a photographic camera of class $\beta$. 
Up to coordinate changes in $\PP^3$ we may assume
$A=\begin{pmatrix} 1 & 0 & 0 & 0 \\ 0 & 1 & 0 & 0 \end{pmatrix}$ and $B=\begin{pmatrix} 0 & 0 & 1 & 0 \\ 0 & 0 & 0 & 1 \end{pmatrix}$. 
The map (\ref{eq:alg_camera_model}) is undefined on the line $L=V(x_1,x_2)$ 
and on the parametric curve $X(s : t)$  as in \eqref{eq:Xpara}.
That curve has degree $\beta$ and it intersects $L$ in $\beta-1$ points.  The pre-image of a point  $(u,v)\in \PP^1 \times \PP^1$ is the line with Pl\"ucker coordinates
\[
\begin{bmatrix}
u_0 f(u_0,u_1) \\ u_1 f(u_0,u_1) \\ g(u_0,u_1) \\ h(u_0,u_1) \\
\end{bmatrix} \vee
\begin{bmatrix} 0 \\ 0 \\
 v_0 \\ v_1 
 \end{bmatrix} \,\,=\,\, \begin{bmatrix}
 0 \\  v_0 u_0 f(u_0, u_1) \\ v_1 u_0 f(u_0, u_1) \\ v_0 u_1 f(u_0, u_1) \\ v_1 u_0 f(u_0, u_1) \\ v_1 g(u_0,u_1) - v_0 h(u_0,u_1)
 \end{bmatrix}.
\]
This camera is a  $(1,\beta)$-congruence of type 3 as in Section \ref{sec4}.
 The points $u$ and $v$ are respectively the parameters for points on $X$ and $L$.  
 A photographic camera $\PP^3 \dashrightarrow \PP^2$ is
  obtained as in the two-slit case by composing \eqref{eq:alg_camera_model} with 
  a birational map $\PP^1 \times \PP^1 \dashrightarrow \PP^2$.
  

\begin{example} \rm  The map $M(x)=((x_0 : x_1),(x_0^2+x_1^2-x_0x_2 : x_0x_1 - x_0 x_3))$
is a photographic camera $\PP^3 \dashrightarrow \PP^1 \times \PP^1$ with $\beta = 2$. It corresponds to the congruence of lines intersecting $L= V(x_0,x_1)$ and $X(s:t)=(s^2 : st : s^2 + t^2 : st)$. A photographic camera  $\PP^3 \dashrightarrow \PP^2$ for the same congruence is $M'(x)=( x_0^3+x_0x_1^2-x_0^2x_2 : x_0^2 x_1-x_0^2 x_3 : x_0^2 x_1+ x_1^3-x_0x_1x_2)$. The base locus of $M'$ is the union of $X$ and $L$. 
\hfill $\diamondsuit$
\end{example}

\subsection{Multi-View Varieties and Fundamental Tensors}

Fix any collection of photographic cameras $M_1,\ldots, M_{n_1}, M'_{1}, \ldots , M'_{n_2}$ where $M_i: \PP^3 \dashrightarrow \PP^2$ and $M'_j: \PP^3 \dashrightarrow \PP^1 \times \PP^1$. 
The associated {\em multi-view variety} is the
closure of the image of
\begin{equation}\label{eq:joint_projection}
\PP^3 \dashrightarrow (\PP^2)^{n_1} \times (\PP^1 \times \PP^1)^{n_2}, 
\,\,\, x \mapsto \bigl( M_1(x),\ldots, M_{n_1}(x), M'_1(x), \ldots, M'_{n_2}(x) \bigr).
\end{equation}
This definition extends the usual notion of multi-view varieties
 in \cite{AST, THP}. The following result is the direct generalization of
Proposition \ref{prop:multiview} from pinhole cameras to arbitrary photographic cameras.
We write  $C_{M_i}$ and $C_{M'_j}$ for the congruences associated with $M_i$ and $M'_j$.

\begin{proposition} The multi-view variety for $M_1,\ldots, M_{n_1}, M'_{1}, \ldots , M'_{n_2}$ is birational
 to the multi-image variety $M(C_{M_1},\ldots, C_{M_{n_1}}, C_{M'_1},\ldots, C_{M'_{n_2}})$, defined in 
 Section \ref{sec5}, under the map
\begin{equation}\label{eq:photographic_multicamera}
L_{M_1} \times \cdots \times L_{M_{n_1}} \times L_{M'_1} \times \cdots \times L_{M'_{n_2}}: (\PP^2)^{n_1} \times (\PP^1 \times \PP^1)^{n_2} \dashrightarrow {\rm Gr(1,\PP^3)}^n,
\end{equation}
where $L_{M_i}, L_{M'_j}$
 are  parameterizations of the congruences $C_{M_i},C_{M'_j}$, as in Subsection \ref{subsec71}.
 \end{proposition}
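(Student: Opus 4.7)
The plan is to exploit the factorization $\psi = L \circ \phi$ where $\phi = (M_1,\ldots,M_{n_1},M'_1,\ldots,M'_{n_2})$ is the rational map from $\PP^3$ whose image closure defines the multi-view variety $W$, and $\psi = (C_{M_1}(\cdot),\ldots,C_{M'_{n_2}}(\cdot))$ is the rational map from $\PP^3$ whose image closure defines the multi-image variety $M(C_{M_1},\ldots,C_{M'_{n_2}})$. This factorization is immediate from the identity $C_{M_i}(x) = L_{M_i}(M_i(x))$ (and its analogue for the two-factor cameras) recorded in Subsection~\ref{subsec71}.

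First, I would argue that each component $L_{M_i}\colon\PP^2 \dashrightarrow C_{M_i}$ and $L_{M'_j}\colon\PP^1\times\PP^1 \dashrightarrow C_{M'_j}$ is birational onto its image. Indeed, these maps are already known to be rational and injective, and in each case source and target are irreducible surfaces of the same dimension; hence each has a rational inverse defined on a dense open subset of the corresponding congruence. Taking the product, $L$ is a birational map from $(\PP^2)^{n_1}\times(\PP^1\times\PP^1)^{n_2}$ onto $C_{M_1}\times\cdots\times C_{M'_{n_2}}$ in $({\rm Gr}(1,\PP^3))^n$.

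Next I would use the factorization in both directions. From $\psi = L\circ\phi$, the map $L$ sends the image of $\phi$ into the image of $\psi$, hence (passing to Zariski closures) restricts to a dominant rational map $W \dashrightarrow M(C_{M_1},\ldots,C_{M'_{n_2}})$. From $\phi = L^{-1}\circ\psi$, the inverse $L^{-1}$ sends the image of $\psi$ into the image of $\phi$, hence restricts to a dominant rational map in the opposite direction. Composition on either side is the identity on a dense open subset inherited from the dense open subset of $\PP^3$ on which $\phi$, $\psi$, $L$, and $L^{-1}$ are all simultaneously defined, so these restrictions are mutually inverse birational maps between $W$ and the multi-image variety.

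The only point requiring attention is the indeterminacy bookkeeping. One must verify that a generic point of $W$ lies outside the indeterminacy locus of $L^{-1}$, and a generic point of $M(C_{M_1},\ldots,C_{M'_{n_2}})$ lies outside the indeterminacy locus of $L$. This is automatic because both $W$ and $M(C_{M_1},\ldots,C_{M'_{n_2}})$ are the closures of images of rational maps defined on a dense open subset of $\PP^3$, and removing the pre-images of the finitely many indeterminacy loci merely shrinks this to a smaller but still dense open set. No substantive obstacle arises beyond this routine verification.
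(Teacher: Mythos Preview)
Your proof is correct and is precisely the natural argument. The paper states this proposition without proof, relying implicitly on the identity $C_M(x)=L_M(M(x))$ recorded in Subsection~\ref{subsec71} and the injectivity of each $L_M$; your write-up simply makes explicit the routine factorization and indeterminacy bookkeeping that the authors leave to the reader.
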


 From Theorem \ref{thm:pairwisedisjoint} we deduce that, if the base
 loci of $M_1,\ldots, M_{n_1}, M'_{1}, \ldots , M'_{n_2}$ are pairwise
 disjoint, then the multi-view variety is birational to a slice of the
 concurrent lines variety $V_{n_1+n_2}$. Since the closure of the
 image of \eqref{eq:photographic_multicamera} is $C_{M_1}\times \cdots
 \times C_{M_{n_1}} \times C_{M'_1} \times \cdots \times
 C_{M'_{n_2}}$, we can obtain multi-view constraints in image
 coordinates by replacing the Pl\"ucker variables with the coordinates
 of $L_{M_i}(w)$ and $L_{M'_j}((u,v))$ in the multilinear polynomials
 defining $V_{n_1+n_2}$. We intend to investigate these constraints
 and their application to calibrating general cameras in another
 publication.  In the remainder of this paper we get started with a
 special case: we derive the epipolar constraint for two linear
 photographic cameras, pinhole or two-slit.

We first consider two pinhole cameras $\PP^3 \dashrightarrow \PP^2$, identified with two 
$3 \times 4$-matrices $A$ and $B$. The camera $A$ induces a parameterization 
$L_{A}:\PP^2 \rightarrow {\rm Gr}(1,\PP^3)$ of its $\alpha$-plane via
\begin{equation}\label{eq:parameterization_pinhole}
w \,\,\mapsto \,\, w_0 (A_{1} \wedge A_{2}) -  w_1 (A_{0} \wedge A_{2}) + w_2  (A_{1} \wedge A_{2}),
\end{equation}
where the $A_i$ are row vectors of $A$.
A similar expression holds for $L_B(u')$. 
Replacing Pl\"ucker coordinates in the incidence constraint ${\rm trace}(PQ^*)=0$ 
with the images of $w$ and $w'$, we obtain $w'^T F w=0$ where $F$ is the 
 {\em fundamental matrix} in multi-view geometry. Its entries~are 
  \[ f_{il}\,\,=\,\,  (-1)^{i+l} \cdot \det \begin{bmatrix} A_j^T & A_k^T & B_m^T & B_n^T
\end{bmatrix}, \]
where $(i,j,k)$ and $(l,m,n)$ are triplets of distinct indices. The (closure of the) set of all fundamental matrices 
is the cubic hypersurface in $\PP^8$ that is defined by the $3 \times 3$-determinant.

Now let $(A,B)$ and $(C,D)$ be pairs of $2 \times 4$ matrices describing two-slit cameras. The corresponding line congruences can be parametrized similarly to
(\ref{eq:parameterization_pinhole}), using
 \eqref{eq:parameterization_two_slit}. One deduces that
 a pair $\bigl((u,v),(u',v')\bigr) \in (\PP^1 \times \PP^1)^2 $ belongs to the multi-view variety if and only if 
 $\sum_{i,j,k,l=1}^2 f_{ijkl} \, u_i v_j u'_k v'_l=0$ where $F$ is the $2 \times 2 \times 2 \times 2$ 
 quadrifocal tensor, with entries
\[f_{ijkl}\,\,= \,\,(-1)^{i+j+k+l} \cdot \det \begin{bmatrix} A_{\hat i}^T &  B_{\hat j}^T &  C_{\hat k}^T  
&  D_{\hat l}^T  \end{bmatrix}. \]
The set of such tensors forms a $13$-dimensional variety in $\PP^{15}$.
According to \cite[Theorem 3]{LS}, this variety is defined by $718$ polynomials of degree $12$.
See \cite[Section 4.1]{Oed} for details and the connection to the more familiar
quadrifocal tensor of size $3 \times 3 \times 3 \times 3$.

Finally, let $A$  be a pinhole camera and $(B,C)$ is a two-slit camera.
By mixing the  two parametrizations used above,
   we obtain a $3 \times 2 \times 2$ epipolar tensor $F$ whose entries are
   \begin{equation}\label{eq:epi322}
f_{ijk}\,\,= \,\, (-1)^{i+j+k}  \cdot \det \begin{bmatrix} A_{l}^T & A_{m}^T &  B_{\hat j}^T &  C_{\hat k}^T \end{bmatrix}.
\end{equation}
Pairs of image points $\bigl(u,(u',v') \bigr) \in \PP^2 \times (\PP^1 \times \PP^1)$
that lie in the multi-view variety are characterized by $\sum_{i,j,k} f_{ijk} \, u_i u'_k v'_l=0$. 
The set of such tensors has codimension $1$ in $\PP^{11}$. 

\begin{proposition} The variety of $3 \times 2 \times 2$ tensors \eqref{eq:epi322} is the unique ${\rm SL}(3) \times {\rm SL}(2) \times
 {\rm SL}(2)$-invariant
 hypersurface of degree $6$ in $\,\PP^{11} = \PP( \CC^3 \times \CC^2 \times \CC^2)$.
 Its defining polynomial is
 \begin{small}
$$ 
\begin{matrix}
f_{111}^2 f_{212} f_{221} f_{322}^2
-f_{111}^2 f_{212} f_{222} f_{321} f_{322}
-f_{111}^2 f_{221} f_{222} f_{312} f_{322}
+f_{111}^2 f_{222}^2 f_{312} f_{321}  \\
-f_{111} f_{112} f_{211} f_{221} f_{322}^2
+f_{111} f_{112} f_{211} f_{222} f_{321} f_{322}
-f_{111} f_{112} f_{212} f_{221} f_{321} f_{322}
+f_{111} f_{112} f_{212} f_{222} f_{321}^2  \\
+f_{111} f_{112} f_{221}^2 f_{312} f_{322}
+f_{111} f_{112} f_{221} f_{222} f_{311} f_{322}
-f_{111} f_{112} f_{221} f_{222} f_{312} f_{321}
-f_{111} f_{112} f_{222}^2 f_{311} f_{321} \\
-f_{111} f_{121} f_{211} f_{212} f_{322}^2 
+f_{111} f_{121} f_{211} f_{222} f_{312} f_{322}
+f_{111} f_{121} f_{212}^2 f_{321} f_{322}
-f_{111} f_{121} f_{212} f_{221} f_{312} f_{322} \\
+f_{111} f_{121} f_{212} f_{222} f_{311} f_{322} 
-f_{111} f_{121} f_{212} f_{222} f_{312} f_{321}
+f_{111} f_{121} f_{221} f_{222} f_{312}^2
-f_{111} f_{121} f_{222}^2 f_{311} f_{312} \\
+f_{111} f_{122} f_{211} f_{212} f_{321} f_{322} 
+f_{111} f_{122} f_{211} f_{221} f_{312} f_{322}
-2 f_{111} f_{122} f_{211} f_{222} f_{312} f_{321}
-f_{111} f_{122} f_{212}^2 f_{321}^2 \\
-2 f_{111} f_{122} f_{212} f_{221} f_{311} f_{322} 
+2 f_{111} f_{122} f_{212} f_{221} f_{312} f_{321}
+f_{111} f_{122} f_{212} f_{222} f_{311} f_{321}
{-} f_{111} f_{122} f_{221}^2 f_{312}^2 \\
+f_{111} f_{122} f_{221} f_{222} f_{311} f_{312} 
+f_{112}^2 f_{211} f_{221} f_{321} f_{322}
-f_{112}^2 f_{211} f_{222} f_{321}^2
-f_{112}^2 f_{221}^2 f_{311} f_{322} \\
+f_{112}^2 f_{221} f_{222} f_{311} f_{321} 
+f_{112} f_{121} f_{211}^2 f_{322}^2
-f_{112} f_{121} f_{211} f_{212} f_{321} f_{322}
-f_{112} f_{121} f_{211} f_{221} f_{312} f_{322} \\
+f_{112} f_{121} f_{222}^2 f_{311}^2
+2 f_{112} f_{121} f_{211} f_{222} f_{312} f_{321}
+2 f_{112} f_{121} f_{212} f_{221} f_{311} f_{322}
-f_{112} f_{121} f_{212} f_{222} f_{311} f_{321} \\
-2 f_{112} f_{121} f_{211} f_{222} f_{311} f_{322} 
-f_{112} f_{121} f_{221} f_{222} f_{311} f_{312} 
{-}f_{112} f_{122} f_{211}^2 f_{321} f_{322}
{+}f_{112} f_{122} f_{211} f_{221} f_{311} f_{322}  \\
+f_{112} f_{122} f_{211} f_{212} f_{321}^2 
-f_{112} f_{122} f_{211} f_{221} f_{312} f_{321}
+f_{112} f_{122} f_{211} f_{222} f_{311} f_{321}
{-}f_{112} f_{122} f_{212} f_{221} f_{311} f_{321} \\
+f_{112} f_{122} f_{221}^2 f_{311} f_{312} 
{-}f_{112} f_{122} f_{221} f_{222} f_{311}^2
{+}f_{121}^2 f_{211} f_{212} f_{312} f_{322}
{-}f_{121}^2 f_{211} f_{222} f_{312}^2 
{-}f_{121}^2 f_{212}^2 f_{311} f_{322} \\
+f_{121}^2 f_{212} f_{222} f_{311} f_{312}
-f_{121} f_{122} f_{211}^2 f_{312} f_{322}
+f_{121} f_{122} f_{211} f_{212} f_{311} f_{322} 
-f_{121} f_{122} f_{211} f_{212} f_{312} f_{321} \\
+f_{121} f_{122} f_{211} f_{221} f_{312}^2
+f_{121} f_{122} f_{211} f_{222} f_{311} f_{312}
+f_{121} f_{122} f_{212}^2 f_{311} f_{321} 
-f_{121} f_{122} f_{212} f_{221} f_{311} f_{312}  \\
-f_{121} f_{122} f_{212} f_{222} f_{311}^2
{+}f_{122}^2 f_{211}^2 f_{312}f_{321}
{-}f_{122}^2 f_{211} f_{212} f_{311} f_{321} 
{-}f_{122}^2 f_{211} f_{221} f_{311} f_{312} 
{+}f_{122}^2 f_{212} f_{221} f_{311}^2
\end{matrix}
$$
\end{small}
\end{proposition}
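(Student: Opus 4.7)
The statement combines two claims: \emph{uniqueness} of a degree-$6$ $G$-invariant hypersurface in $\PP^{11}$ for $G = SL(3) \times SL(2) \times SL(2)$, and \emph{identification} of that hypersurface with the variety of epipolar tensors \eqref{eq:epi322}. My plan tackles each separately and then combines them.

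For uniqueness, I would compute the dimensions $\dim (\operatorname{Sym}^d V^*)^G$ for $V = \CC^3 \otimes \CC^2 \otimes \CC^2$ and $d \leq 6$, and verify that they equal $1,0,0,0,0,0,1$. This is a Molien / Weyl integration computation: parametrizing the maximal torus of $G$ by $x_1, x_2, x_3 = (x_1 x_2)^{-1}$ for $SL(3)$ and $y^{\pm 1}, z^{\pm 1}$ for the two $SL(2)$'s, the Molien series reads
\[
M(t) \;=\; \frac{1}{24}\operatorname{CT}_{x,y,z}\!\left[|\Delta_{SU(3)}(x)|^2 |\Delta_{SU(2)}(y)|^2 |\Delta_{SU(2)}(z)|^2 \prod_{i=1}^3 \prod_{\epsilon,\delta \in \{\pm 1\}} \frac{1}{1 - t\, x_i y^\epsilon z^\delta}\right],
\]
and one expands through degree $6$. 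Equivalently, invoking Gelfand--Kapranov--Zelevinsky: the format $(3,2,2)$ sits on the boundary of the balance condition $n_j \leq \sum_{i \neq j} n_i$, so the dual variety of the Segre embedding $\PP^2 \times \PP^1 \times \PP^1 \hookrightarrow \PP^{11}$ is an irreducible $G$-invariant hypersurface of degree exactly $6$, cut out by the $(3,2,2)$-hyperdeterminant $\Phi$. Either way, $\Phi$ is the unique invariant polynomial (up to scalar) of degree at most $6$.

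For identification, the variety $X$ of epipolar tensors is $G$-invariant by construction: replacing $(A,B,C)$ by $(g_1 A, g_2 B, g_3 C)$ for $(g_1, g_2, g_3) \in GL(3) \times GL(2) \times GL(2)$ reshuffles the rows of each camera matrix and induces the natural tensor action on $F$. It is a hypersurface by a parameter count: camera triples live in $\PP^{11} \times \PP^7 \times \PP^7$ of dimension $25$, but $F$ is scale-invariant under the $15$-dimensional right $PGL(4)$ gauge on the world $\PP^3$ (since $(Ag, Bg, Cg)$ produces $\det(g) \cdot F$ by multilinearity of the $4 \times 4$ determinants in \eqref{eq:epi322}), leaving a $10$-dimensional effective parameter space that maps generically finitely into $\PP^{11}$, so $X$ has codimension one. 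Evaluating $\Phi$ on one generic epipolar tensor $F_0$ (produced from a random camera triple) and verifying $\Phi(F_0) = 0$ as a finite computer algebra check shows $X \subseteq V(\Phi)$; since both $X$ and $V(\Phi)$ are irreducible hypersurfaces, $X = V(\Phi)$, and the displayed polynomial agrees with $\Phi$ up to scalar.

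The main obstacle is the Weyl-integration computation, which couples the non-self-dual $SL(3)$-representation on $\CC^3$ with two copies of the $SL(2)$-representation on $\CC^2$. My plan is to first average over the two $SL(2)$-factors via the Cayley--Sylvester formula, reducing the problem to counting $SL(3)$-invariants in a specific $SL(3)$-module, and then to carry out a single $SL(3)$ Weyl integration. A finite numerical check in \texttt{LiE} or \texttt{Macaulay2} on the dimensions $0,0,0,0,0,1$ suffices to close the argument.
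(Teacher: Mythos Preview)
Your approach is correct but takes a genuinely different route from the paper. The paper's proof is a single sentence: the defining polynomial is obtained by elimination from the prime ideal of the trifocal variety of $3\times 3\times 3$ tensors, known from Aholt--Oeding. It is a pure computer-algebra derivation, and the invariant-theoretic characterization is simply observed after the fact.

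Your argument is more structural. Identifying the degree-$6$ invariant with the $(3,2,2)$-hyperdeterminant explains \emph{why} the degree is $6$ and connects the result to GKZ theory; it also gives a clean alternative description of the polynomial as the discriminant of the ternary quadratic form $\det(x_1 M_1 + x_2 M_2 + x_3 M_3)$, where the $M_i$ are the three $2\times 2$ slices of $F$. Two small points to tighten: first, GKZ alone gives existence of a degree-$6$ invariant but not its uniqueness among degree-$6$ invariants, so the Molien computation (or the first-fundamental-theorem argument via $SL(2)\times SL(2)\cong \mathrm{Spin}(4)$ reducing to $SL(3)$-invariants of ternary quadratic forms) is the part that actually carries the uniqueness claim---your ``equivalently'' is a bit generous. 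Second, the assertion that the parameter map is generically finite onto a $10$-dimensional image deserves a Jacobian-rank check at one point; without it, your dimension count only gives $\dim X \le 10$, which is still enough once you verify symbolically that $\Phi\circ F \equiv 0$ and use irreducibility of $V(\Phi)$. Compared to the paper's elimination, your route costs a little more bookkeeping but yields a conceptual explanation that the paper does not provide.
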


\begin{proof}
The principal ideal of this hypersurface 
of $3 \times 2 \times 2$-tensors
can be computed by elimination  from the
prime ideal of the trifocal variety \cite{AO} in the space of
$3 \times 3 \times 3$-tensors.
\end{proof}

\medskip

\begin{small}
\noindent
{\bf Acknowledgements.}\smallskip \\
  This project started at the ``Algebraic Vision'' workshop held in May 2016 at the American 
  Institute of Mathematics (AIM) in  San Jose.  We are grateful to the organizers,  Sameer Agarwal, Max Lieblich and Rekha Thomas, for bringing us together. We also thank John
Canny, Xavier Goaoc, Martial Hebert, Joe Kileel, Kathl\'en Kohn, Luke Oeding
and Fran\c coise P\`ene for helpful comments and discussions.
 Bernd Sturmfels was supported in part by the
  US National Science Foundation  (DMS-1419018)
  and the Einstein Foundation Berlin.
Jean Ponce and Mathew Trager were supported in part by the ERC
advanced grant VideoWorld and the Institut Universitaire de France.
\end{small}

\medskip

\bigskip

\noindent
\footnotesize {\bf Authors' addresses:}

\smallskip

\noindent  Jean Ponce, 
\'Ecole Normale Sup\'erieure/PSL Research Univ.~and INRIA Paris, France,
{\tt Jean.Ponce@ens.fr}

\smallskip

\noindent Bernd Sturmfels,
University of California, Berkeley, USA,
{\tt bernd@berkeley.edu}

\smallskip

\noindent Mathew Trager,
INRIA Paris, France,
{\tt matthew.trager@inria.fr}

\end{document}